\def\section{\@ifstar\unnumberedsection\numberedsection}
\def\numberedsection{\@ifnextchar[
  \numberedsectionwithtwoarguments\numberedsectionwithoneargument}
\def\unnumberedsection{\@ifnextchar[
  \unnumberedsectionwithtwoarguments\unnumberedsectionwithoneargument}
\def\numberedsectionwithoneargument#1{\numberedsectionwithtwoarguments[#1]{#1}}
\def\unnumberedsectionwithoneargument#1{\unnumberedsectionwithtwoarguments[#1]{#1}}
\def\numberedsectionwithtwoarguments[#1]#2{%
  \ifhmode\par\fi
  \removelastskip
  \vskip 5ex\goodbreak
  \refstepcounter{section}%
  \hbox to \hsize{\hss\vbox{\advance\hsize by 1cm
      \noindent
      \leavevmode\huge\bfseries\raggedright
      \thesection.\ 
      #2\par
      \vskip -2ex
      \noindent\hrulefill
      }}\nobreak
  \vskip 2ex\nobreak
  }
\def\unnumberedsectionwithtwoarguments[#1]#2{%
  \ifhmode\par\fi
  \removelastskip
  \vskip 5ex\goodbreak
  \hbox to \hsize{\hss\vbox{\advance\hsize by 1cm
      \noindent
      \leavevmode\huge\bfseries\raggedright
      #2\par
      \vskip -2ex
      \noindent\hrulefill
      }}\nobreak
  \vskip 2ex\nobreak
  }
\newtheoremstyle{THEOREME}{}{}{\sffamily}{}{\bfseries\scshape}{.}{.5em}{}
\theoremstyle{THEOREME}
\newtheorem{theoreme}{Th\'eor\`eme}[subsection]
\newtheorem{corollaire}[theoreme]{Corollaire}
\newtheorem{proposition}[theoreme]{Proposition}
\newtheorem{notation}[theoreme]{Notation}
\newtheoremstyle{DEFINITION}{0.5cm}{0.5cm}{\itshape}{}{\bfseries}{.}{.5em}{}
\theoremstyle{DEFINITION}
\newtheorem{definition}[theoreme]{D\'efinition}
\newtheoremstyle{REMARQUE}{}{}{\normalfont}{}{\bfseries}{.}{.5em}{}
\theoremstyle{REMARQUE}
\newtheorem{remarque}[theoreme]{Remarque}
\newtheoremstyle{LEMME}{}{}{\slshape}{}{\bfseries}{.}{.5em}{}
\theoremstyle{LEMME}
\newtheorem{lemme}[theoreme]{Lemme}
\font\tencyr=wncyr6
\def\cyr{\tencyr\cyracc}
\newcommand{\cyrb}{{\cyr B}}
\newcommand{\SW}[1]{\textbf{\textit{\emph{\textsf{#1}}}}}
\newcommand{\SWm}[1]{{\textit{\emph{\textsf{#1}}}}}
\newcommand{\dpl}[1]{\displaystyle{#1}}
\def\ie{\textit{i.e. }}
\def\cf{\textit{c.f. }}
\newcommand{\Ob}{\SWm{Ob}}
\newcommand{\A}{\mathscr{A}}
\newcommand{\CC}{\mathscr{C}}
\newcommand{\T}{\mathscr{T}}
\renewcommand{\H}{\mathscr{H}}
\newcommand{\G}{\mathscr{G}}
\newcommand{\comp}{{\SWm{C}}}
\newcommand{\qhtt}{{\SWm{QK}}}
\newcommand{\htt}{{\SWm{K}}}
\newcommand{\Sch}{{\mathfrak{Sch}}}
\newcommand{\opp}{\textsf{opp}}
\newcommand{\Hom}{\SWm{Hom}}
\newcommand{\HHom}{\SWm{\underline{Hom}}}
\newcommand{\Id}{\SWm{Id}}
\newcommand{\CCp}[1]{\CC_{{w}\geqslant {#1}}}
\newcommand{\CCn}[1]{\CC_{{w}\leqslant {#1}}}
\newcommand{\CCe}[1]{\CC_{{w}= {#1}}}
\newcommand{\obp}[2]{{#1}_{w \geqslant #2}}
\newcommand{\obn}[2]{{#1}_{w \leqslant #2}}
\newcommand{\Xp}[1]{\obp{X}{#1}}
\newcommand{\Xn}[1]{\obn{X}{#1}}
\newcommand{\W}{\SWm{W}}
\newcommand{\Wc}{{\check{\SWm{W}}}}
\newcommand{\chic}{{\check{\chi}}}
\newcommand{\Chow}[1]{{\SWm{Chow}(#1)}}
\newcommand{\DMB}[1]{{\SWm{DM}_{\text{\cyrb}}(#1)}}
\newcommand{\DMBc}[1]{{\SWm{DM}_{\text{\cyrb},c}(#1)}}
\newcommand{\DMBe}[1]{{{\SWm{DM}_{\text{\cyrb}}(#1)}_{W= 0}}}
\newcommand{\DMBcp}[1]{{{\SWm{DM}_{\text{\cyrb},c}(#1)}_{w\geqslant 0}}}
\newcommand{\DMBcn}[1]{{{\SWm{DM}_{\text{\cyrb},c}(#1)}_{w\leqslant 0}}}
\newcommand{\DMBce}[1]{{{\SWm{DM}_{\text{\cyrb},c}(#1)}_{w= 0}}}
\newcommand{\tate}{{\SW{Tate}}}
\newcommand{\DMBcT}[1]{{\SWm{DM}_{\text{\cyrb},c,T}(#1)}}
\newcommand{\DMBcTe}[1]{{{\SWm{DM}_{\text{\cyrb},c,T}(#1)}_{\tate= 0}}}
\newcommand{\Pos}{\SWm{POS}}
\newcommand{\HB}{\SWm{H}_{\text{\cyrb}}}
\newcommand{\N}{\mathbb{N}}
\newcommand{\Z}{\mathbb{Z}}
\newcommand{\Q}{\mathbb{Q}}
\newcommand{\C}{\mathbb{C}}
\newcommand{\morph}[2]{#1 \rightarrow #2}
\newcommand{\morphp}[3]{#1 \rightarrow #2 \rightarrow #3}
\newcommand{\immouv}[1][r]{\ar@{^(->}[#1]|*[@]{\circ}\ar[#1]}
\newcommand{\immfer}[1][r]{\ar@{^(->}[#1]|*[@]{\shortmid}\ar[#1]}
\newcommand{\immop}[2]{\raisebox{.7ex}{\xymatrix{#1 \immouv[r]& #2}}}
\newcommand{\immcl}[2]{\raisebox{.7ex}{\xymatrix{#1 \immfer[r]& #2}}}
\newcommand{\tridis}[3]{#1 \rightarrow #2 \rightarrow #3 \overset{+1}{\longrightarrow}}
\def\cartesien{\ar@{}[rd]|{\square}}
\newcommand{\Kar}{\mathfrak{R}}
\newcommand{\SE}{{\mathfrak{Ext}}}
\newcommand{\CA}[1]{{\left\langle{#1}\right\rangle}}
\newcommand{\CAep}[1]{{\left\langle{#1}\right\rangle}^{\text{ép}}}
\newcommand{\CAinf}[1]{{\left\langle{#1}\right\rangle}_{\infty}}
\newcommand{\CAepinf}[1]{{\left\langle{#1}\right\rangle}^{\text{ép}}_{\infty}}
\newcommand{\Un}{\mathds{1}}
\newcommand{\D}{\SWm{D}}
\newcommand{\KK}{\SWm{K}}
\newcommand{\Spec}{\SWm{Spec}}
\newcommand{\Dim}{\SWm{Dim}}
\newcommand{\overlined}[1]{\overline{\overline{#1}}}
\newenvironment{quot}{
\begin{quote}
$\ulcorner$ \scriptsize
}
{
\normalsize
$\hfill\lrcorner$
\end{quote}
}
\title{Complexe de Poids,\\
Dualité\\
et\\
Motifs de Beilinson}
\author{David H\'ebert}
\date{}
\begin{document}

\maketitle

\renewcommand{\abstractname}{\textsc{Abstract}}
\begin{abstract}
\noindent In the article \cite{GS95}, Gillet and Soulé define a weight complex on the category of Voevodsky motives over a field of characteristic $0$. In \cite{Bo}, Bondarko generalizes this construction for any $f$-category with a bounded weight structure, as is the case for Beilinson motives (following Cisinski-Déglise ; \cite{CD}). The first purpose of this note is to generalize \cite[thm. 2]{GS95} in the world of Beilinson motives. This done, we will naturally be led to define the motivic Euler characteristic dual to that considered by Bondarko in \cite{Bo2}. This fact will motivate the second line of this note : proving that the duality operation exchanges the weight as is the case for $t$-structure (\cite[5.1.14.$(iii)$]{BBD}).
\end{abstract}

\tableofcontents

\section*{Introduction}\markright{INTRODUCTION}
\addcontentsline{toc}{section}{Introduction}
Dans l'article \cite{GS95}, Gillet et Soulé définissent un complexe de poids sur la catégorie des motifs de Voevodsky définie sur un corps de caractéristique $0$. Dans \cite{Bo}, Bondarko généralise cette construction pour toute $f$-catégorie munie d'une structure de poids bornée, comme c'est le cas pour les motifs de Beilinson (suivant Cisinski-Déglise ; \cite{CD}). Le premier but de cette note est de généraliser \cite[thm. 2]{GS95} dans le monde des motifs de Beilinson. Ceci fait, on sera naturellement amené à définir la caractéristique d'Euler motivique duale à celle considérée par Bondarko dans \cite{Bo2}. Ce fait motivera le second axe de cette note : prouver que l'opération de dualité échange les poids comme c'est le cas pour les $t$-structures (\cite[5.1.14.$(iii)$]{BBD}).

La première partie regroupe les résultats de la théorie de Bondarko que nous utiliserons ici. On applique ces constructions, dans la seconde partie, aux motifs de Beilinson : on commence par faire quelques rappels sur cette catégorie de motifs (section \ref{sec.rap.mot}), puis on  prouve que la dualité renverse les poids (corollaire \ref{prop.dual.conj}). On donne ensuite un encadrement du poids du motif associé à un schéma (théorème \ref{thm.mesure.motifs}) et des résultats d'annulation cohomologique (corollaire \ref{cor.ann.coh}) que l'on applique à la catégorie des motifs de Tate (\ref{cor.struct.Poids.Tate}). Finalement, on établit la version relative de \cite[thm. 2. $(i)$, $(ii)$, $(iii)$, $(iv)$]{GS95} (theorème \ref{equi.GS95.thm2}) que l'on utilise pour définir la caractéristique d'Euler motivique (corollaire \ref{cor.car.Euler}).

\noindent \textbf{Notations et conventions.}

\noindent Si $\CC$ est une catégorie, la notation $\H\subset \CC$ (où $\CC\supset\H$) signifiera toujours que $\H$ est une sous-catégorie pleine de $\CC$. Pour cette raison nous décrirons les sous-catégories pleines uniquement par la classe de leurs objets. Nous adopterons également les notations ensemblistes ($\in$, $\exists$, $\cup$, $\cap$, etc.) pour les catégories. Par exemple, la notation $X\in \CC$ signifiera toujours que $X$ est un objet de $\CC$. Les triangles distingués seront notés $\tridis{A}{B}{C}$. On dira qu'un foncteur entre catégories triangulées est un foncteur de catégories triangulées s'il s'agit d'un foncteur additif transformant tous triangles distingués en triangles distingués.

Tous les schémas considérés sont de type fini sur une base $B$ excellente de dimension de Krull au plus $2$. 
Les morphismes entre schémas sont séparés. On note $\Sch/S$ la catégorie des schémas séparés au dessus de $S$.

On reprend une large partie des notations introduites dans \cite[déf. 1.1, 1.3, 1.11]{Heb} ; entre autre : si $\H\subset\CC$, $\Kar(\H)$ désigne la sous-catégorie pleine de $\CC$ formée des rétractes d'objets de $\H$ et ${}^{\bot}\H$ (resp. $\H^\bot$) la sous-catégorie des objets orthogonaux à gauche (resp. à droite) de $\H$. Lorsque $\CC$ est triangulée et $n\in \N$, $\SE^n_\CC{\H}$ (resp. $\SE_\CC(\H)$) désigne les $n$-extensions (resp. les extensions) d'objets de $\H$. La catégorie $\CA{\H}$ (resp. $\CAep{\H}$) désigne la sous-catégorie triangulée (resp. la sous-catégorie triangulée et épaisse) engendrée par $\H$. On peut ajouter à ces constructions des petites sommes (lorsqu'elles existent dans $\CC$), $\H^\infty$, $\SE_\CC^\infty{\H}$, $\CAinf{\H}$, $\CAepinf{\H}$.

\newpage

\section{Outils et sorites.}
\addcontentsline{toc}{section}{\protect\numberline{\thesection} Outils et sorites.}

\subsection{Quasi-homotopies et homotopies.}

Soit $\A$ une cat\'egorie poss\'edant un objet \`a la fois final et
initial, not\'e $0$.

\begin{definition}
Un \SW{complexe} $A$ de $\A$ est la donn\'ee d'une famille $(A^n)_{n
\in \Z}$ d'objets de $\A$ et d'une famille de morphismes $(d_A^n)_{n
\in \Z}$, appel\'es \SW{diff\'erentielles}, telles que pour tout $n
\in \Z$, $d_A^n \in \Hom_\A(A^n,A^{n+1})$ et $d_A^{n+1}d_A^{n}=0$.
\end{definition}

\begin{definition}
Soient $A$ et $B$ deux complexes de $\A$. Un \SW{morphisme de
complexes} $f:\morph{A}{B}$ est une suite de morphismes $(f^n)_{n
\in \Z}$ o\`u $\dpl{f^n: \morph{A^n}{B^n}}$ telle que  pour tout $n
\in \Z$, $d_B^nf^n=f^{n+1}d_A^n$.
\end{definition}

On consid\`ere alors la cat\'egorie des complexes de $\A$ not\'ee
$\comp(\A)$.

\begin{definition}\label{comp.tronc}
Un complexe $A$ de $\A$ est dit \SW{born\'e inf\'erieurement} (resp.
\SW{born\'e sup\'erieurement}) s'il existe $N \in \Z$ tel que
$$\forall n \in \Z , \; n<N \quad A^n = 0, \qquad
(\text{resp.}\quad  \forall n \in \Z , \; n>N \quad A^n = 0)$$ Un
complexe sera dit \SW{born\'e}, s'il est \`a la fois born\'e
inf\'erieurement et sup\'erieurement. On note $\comp^b(\A)$ (resp.
$\comp^+(\A)$, resp. $\comp^-(\A)$), la sous-cat\'egorie pleine de
$\comp(\A)$ dont les objets sont les complexes born\'es (resp.
born\'es inf\'erieurement, resp. born\'es sup\'erieurement).

Soit $P$ une partie de $\Z$. On note
$\comp^{P}(\A)$, la sous-cat\'egorie pleine de $\comp(\A)$
form\'ee des complexes $A$ telle que pour tout entier
$k\notin P$, $A^k=0$.
\end{definition}

On d\'esigne par $\comp^*(\A)$ l'une de ces cat\'egories ; le
symbole $*$ repr\'esentant $\{+, -, b\}$ ou une partie de $\Z$.

\begin{remarque}
Si $\A$ est une cat\'egorie additive (resp. ab\'elienne), alors il
en va de m\^eme pour $\comp^*(\A)$.
\end{remarque}

On fixe une cat\'egorie additive $\A$, et trois objets  $A$, $B$ et
$C$ de $\comp^*(\A)$.

\begin{definition}\label{def.homot.rela}
Soient $f$ et $g$ deux \'el\'ements de $\Hom_{\comp^*(\A)}(A,B)$. On
dit que $f$ et $g$ sont \SW{quasi-homotopes} (resp. \SW{homotopes}), s'il existe une collection
de morphismes $\{s^n\}_{n \in \Z}$ et $\{t^n\}_{n\in \Z}$ telle que
pour tout $k \in \Z$, $s^k$ et $t^k$ sont des \'el\'ements de
$\Hom_{\A}(A^k,B^{k-1})$ et telle que
$$f^k-g^k = s^{k+1}d_A^k + d_B^{k-1}t^k
\qquad(\text{resp.}\quad f^k-g^k = s^{k+1}d_A^k + d_B^{k-1}s^k).$$
\end{definition}

\begin{lemme}\label{grp.ht.qht}
Notons $QHt(A,B)$ (resp. $Ht(A,B)$) le sous-ensemble de
$\Hom_{\comp^*(\A)}(A,B)$, form\'e des morphismes quasi-homotopes
(resp. homotopes) au morphisme nul. Alors $QHt(A,B)$ (resp.
$Ht(A,B)$) est un sous-groupe ab\'elien de
$\Hom_{\comp^*(\A)}(A,B)$.
De plus $Ht(A,B)$ est un sous-groupe de $QHt(A,B)$.
\end{lemme}

\begin{definition}[comp. {\cite[déf. 1.3.6]{Bo}}]\label{cat.quasi-homot}
On d\'efinit la \SW{cat\'egorie des quasi-homotopies}, not\'ee $\qhtt^*(\A)$, par les r\`egles :
\begin{description}
\item[$\bullet$] $\Ob(\qhtt^*(\A))=\Ob(\comp^*(\A))$.
\item[$\bullet$] Pour toute paire d'objets $A$ et $B$ de
$\qhtt^*(\A)$,
$\dpl{\Hom_{\qhtt^*(\A)}(A,B)  =
\frac{\Hom_{\comp^*(\A)}(A,B)}{QHt(A,B)}}$.
\item[$\bullet$] Pour tout triplet d'objets $A$, $B$ et $C$ de $\qhtt^*(\A),
$\begin{eqnarray*} \Hom_{\qhtt^*(\A)}(A,B) \times
\Hom_{\qhtt^*(\A)}(B,C) &
\longrightarrow & \Hom_{\qhtt^*(\A)}(A,C) \\
(\overlined{f},\overlined{g}) &\longmapsto& \overlined{gf}
\end{eqnarray*}
o\`u $\overlined{f}$ d\'esigne  la classe de $f \in
\Hom_{\comp^*(\A)}(A,B)$ modulo $QHt(A,B)$.
\end{description}
\end{definition}

\begin{definition}\label{cat.homot}
On d\'efinit la \SW{cat\'egorie d'homotopies}, not\'ee $\htt^*(\A)$, par les r\`egles :
\begin{description}
\item[$\bullet$] $\Ob(\htt^*(\A))=\Ob(\comp^*(\A))$.
\item[$\bullet$] Pour toute paire d'objets $A$ et $B$ de $\htt^*(\A)$,
$\dpl{\Hom_{\htt^*(\A)}(A,B)  =
\frac{\Hom_{\comp^*(\A)}(A,B)}{Ht(A,B)}}$.
\item[$\bullet$] Pour tout triplet d'objets $A$, $B$ et $C$ de $\htt^*(\A)$,
\begin{eqnarray*} \Hom_{\htt^*(\A)}(A,B) \times
\Hom_{\htt^*(\A)}(B,C) &
\longrightarrow & \Hom_{\htt^*(\A)}(A,C) \\
(\overline{f},\overline{g}) &\longmapsto& \overline{gf}
\end{eqnarray*}
o\`u $\overline{f}$ d\'esigne  la classe de $f \in
\Hom_{\comp^*(\A)}(A,B)$ modulo $Ht(A,B)$.
\end{description}
\end{definition}

\begin{remarque}
Soit $\A$ une cat\'egorie additive. Alors la cat\'egorie
$\qhtt^*(\A)$ (resp. $\htt^*(\A)$) est
additive.
\end{remarque}

\subsection{Structures de poids.}

Soit $\CC$ une catégorie triangulée. Rappelons la définition de structure de poids.

\begin{definition}[{comp. \cite[déf. 1.1.1]{Bo}}]\label{def.WS}
On dira que $w=(\CCn{0},\CCp{0})$, où $\CCn{0}$, $\CCp{0}\subset \CC$, est une \SW{structure de poids} sur $\CC$, notée $w/\CC$, si les axiomes suivants sont satisfaits :
\begin{description}
\item[$(SP1)$. Stabilité par rétractes.] Les catégories $\CCn{0}$ et $\CCp{0}$ sont stables par rétractes.
\item[$(SP2)$. Semi-invariance avec respect des translations.] $$\CCn{0}[-1]\subset\CCn{0},\quad\CCp{0}[1]\subset\CCp{0}.$$
\item[$(SP3)$. Orthogonalité faible.] $$\CCn{0}\subset\CCp{0}[1]^\bot.$$
\item[$(SP4)$. Filtration par le poids.] Pour tout $X\in \CC$, il existe un $A\in \CCn{0}$, $B\in \CCp{1}$ et un triangle distingué $\tridis{A}{X}{B}$. On appelle un tel triangle une \SW{filtration par le poids} de $X$.
\end{description}
Pour tout $n\in \Z$, on note
$$\CCn{n}:=\CCn{0}[n],\quad \CCp{n}:=\CCp{0}[n],\quad\CCe{n}:=\CCn{n}\cap\CCp{n}.$$
On appelle $\CCe{0}$ le \SW{c{\oe}ur} de la structure de poids. 
\end{definition}

\`A noter que la notion de structure de poids fut indépendamment introduite par Pauksztello dans \cite{Pau} alors appelée \SW{co-$t$-structure}.
Dans la suite on munit $\CC$ d'une structure de poids $w/\CC$.

\begin{proposition}[comp. {\cite[prop. 1.3.3.3, 1.3.3.7]{Bo}}]\label{coeur.scinde}
Les catégories $\CCn{0}$, $\CCp{0}$ et $\CCe{0}$ sont stables par extensions. De plus, si $\tridis{A}{X}{B}$ est un triangle distingué de $\CC$ tel que $A\in\CCe{0}$ et $B\in \CCe{0}$ alors $X=A\oplus B$.
\end{proposition}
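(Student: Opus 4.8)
The plan is to deduce everything from the four axioms, using the retract-stability $(SP1)$ together with the orthogonality $(SP3)$, which I read as $\Hom_\CC(\CCn{0},\CCp{1})=0$ and hence, after applying $[n]$, as $\Hom_\CC(\CCn{a},\CCp{b})=0$ whenever $b>a$. First, stability of $\CCn{0}$ under extensions. Let $\tridis{A}{X}{B}$ be distinguished with $A,B\in\CCn{0}$; write $\alpha:A\to X$ and $\beta:X\to B$. By $(SP4)$ choose a filtration par le poids $\tridis{X'}{X}{X''}$ of $X$ with $X'\in\CCn{0}$ and $X''\in\CCp{1}$, and call $g:X\to X''$ its middle arrow. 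The key step is to show $g=0$: indeed $g\alpha\in\Hom_\CC(A,X'')=0$ by orthogonality, so $g$ factors through the cofibre $B$ of $\alpha$, say $g=g'\beta$; but then $g'\in\Hom_\CC(B,X'')=0$, again by orthogonality, whence $g=0$. Applying $\Hom_\CC(X,-)$ to the filtration triangle, the vanishing of $g$ makes $X'\to X$ a split epimorphism, so $X$ is a retract of $X'\in\CCn{0}$, and $(SP1)$ gives $X\in\CCn{0}$.

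The case of $\CCp{0}$ is strictly dual. For $A,B\in\CCp{0}$ I take the filtration $\tridis{X'}{X}{X''}$ with $X'\in\CCn{-1}$ and $X''\in\CCp{0}$ (the shift by $[-1]$ of a decomposition provided by $(SP4)$), and I show the arrow $f:X'\to X$ vanishes by the dual two-step factorisation: $\beta f\in\Hom_\CC(\CCn{-1},\CCp{0})=0$, so $f$ factors through $A$, and the resulting arrow $X'\to A$ lies in $\Hom_\CC(\CCn{-1},\CCp{0})=0$ as well. Then $X\to X''$ is a split monomorphism, exhibiting $X$ as a retract of $X''\in\CCp{0}$, and $(SP1)$ yields $X\in\CCp{0}$. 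Stability of $\CCe{0}=\CCn{0}\cap\CCp{0}$ is then immediate, an intersection of extension-stable subcategories being extension-stable.

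For the splitting statement, let $\tridis{A}{X}{B}$ be distinguished with $A,B\in\CCe{0}$ and let $\delta:B\to A[1]$ be its connecting morphism. Since $B\in\CCn{0}$ and $A[1]\in\CCp{0}[1]=\CCp{1}$, orthogonality $(SP3)$ forces $\delta\in\Hom_\CC(B,A[1])=0$. A distinguished triangle whose connecting morphism is zero splits: applying $\Hom_\CC(B,-)$ shows that $\beta:X\to B$ admits a section, so $X\cong A\oplus B$.

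I expect the only genuine subtlety to lie in the first two paragraphs: getting the two orthogonality vanishings in the right direction so that the middle (resp. first) arrow of the weight filtration is killed, and then reading off from $(SP1)$ that it is $X$ itself — rather than one of the filtration pieces — that is the retract of an object of $\CCn{0}$ (resp. $\CCp{0}$). Once $g=0$ (resp. $f=0$) is established, every remaining assertion is a formal application of the splitting lemma for triangles with vanishing connecting map.
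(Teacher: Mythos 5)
Your proof is correct, and for the extension-stability half it takes a genuinely different route from the paper's. The paper settles stability in one line by invoking \emph{strong} orthogonality --- Bondarko's result that $\CCn{0}$ and $\CCp{0}$ are not merely contained in but actually \emph{equal to} orthogonal classes --- combined with the general lemma (\cite[lm. 1.7]{Heb}) that any orthogonal class is stable under extensions. You instead argue directly from the four axioms: given $\tridis{A}{X}{B}$ with $A,B\in\CCn{0}$, you kill the truncation arrow $X\to\Xp{1}$ by the two-step factorisation through $B$, deduce that $\Id_X$ lifts through $\Xn{0}\to X$, and conclude by $(SP1)$ that $X$, being a retract of $\Xn{0}$, lies in $\CCn{0}$; dually for $\CCp{0}$ via the level-$(-1)$ decomposition obtained by shifting $(SP4)$. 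Every step checks out, including the shifted orthogonality $\Hom_\CC(\CCn{a},\CCp{b})=0$ for $b>a$, which you use correctly in both directions of the dual argument. In effect you have reproved the relevant portion of Bondarko's strong-orthogonality statement rather than citing it: your argument is self-contained at the cost of the diagram chase the paper delegates to the literature, while the paper's version is shorter but rests on two external results. For the splitting assertion your argument coincides with the paper's: the connecting map lies in $\Hom_\CC(B,A[1])$, which vanishes by weak orthogonality since $B\in\CCn{0}$ and $A[1]\in\CCp{1}$, and a triangle with zero connecting morphism splits. One remark on conventions: with the paper's declared meaning of $\H^\bot$ (right orthogonal), the axiom $(SP3)$ would literally read $\Hom_\CC(\CCp{1},\CCn{0})=0$; your reading $\Hom_\CC(\CCn{0},\CCp{1})=0$ is nonetheless the one the paper actually uses throughout --- including in its own proof of this very proposition and in the proof of theorem \ref{thm.conj.Heb} --- so you adopted the intended axiom, and stating your reading explicitly at the outset was the right reflex.
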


\begin{proof}
Puisque, par orthogonalité forte, les catégories $\CCn{0}$ et $\CCp{0}$ s'expriment comme des catégories orthogonales, elles sont stables par extensions (\cf \cite[lm. 1.7]{Heb}). Enfin, $\Hom_\CC(B,A[1])=0$ (orthogonalité faible) ce qui justifie que le triangle de l'énoncé se scinde.
\end{proof}

\begin{definition}[comp. {\cite[déf. 1.2.1]{Bo}}]
Soit $X\in \CC$. On dira que $X$ est \SW{born\'e inf\'erieurement} (resp. \SW{born\'e
sup\'erieurement}) s'il existe un entier $n\in \Z$ tel que $X\in\CCp{n}$ (resp. $\CCn{n}$). 
Lorsque $X$ est born\'e sup\'erieurement et inf\'erieurement, on dit qu'il est \SW{born\'e}.

Si tous les objets de $\CC$ sont born\'es (resp.
bornés sup\'erieurement, resp. bornés inférieurement), on dit que $w$ est \SW{born\'ee}
(resp.
\SW{bornée sup\'erieurement}, resp. \SW{bornée inférieurement}). 
\end{definition}

\begin{notation}
Soit $\tridis{A}{X}{B}$ un triangle distingué de $\CC$ tel que $A\in \CCn{n}$ et $B\in\CCp{n+1}$, on note alors 
$$\Xn{n}:=A,\qquad\text{et}\qquad\Xp{n+1}:=B.$$
D'aprés l'axiome $(SP4)$, il existe, pour tout $X\in \Z$ et $n\in \Z$, un triangle de la forme
$\tridis{\Xn{n}}{X}{\Xp{n+1}}$ ; cependant, on prendra garde que les objets $\Xn{n}\in \CCn{n}$ et $\Xp{n+1}\in \CCp{n+1}$ ne sont pas déterminés de manière unique.
\end{notation}

\begin{proposition}[Principe de $w$-distributivité] \label{w.distrib} Soient $a$, $b$, $c$ des entiers relatifs et $X\in \CC$. On peut choisir
$$\obn{X[a+c]}{b+c}=\obn{X[a]}{b}[c]\qquad \text{et}\qquad \obp{X[a+c]}{b+c}=\obp{X[a]}{b}[c].$$
\end{proposition}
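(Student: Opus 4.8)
The plan is to transport a weight filtration of $X[a]$ across the shift functor $[c]$. The essential points are that $[c]$ is a triangulated auto-equivalence of $\CC$, hence carries distinguished triangles to distinguished triangles, and that the two halves of the weight structure are shift-stable in the strongest possible sense: from the very definitions $\CCn{n}=\CCn{0}[n]$ and $\CCp{n}=\CCp{0}[n]$ one reads off $\CCn{b}[c]=\CCn{b+c}$ and $\CCp{b+1}[c]=\CCp{b+1+c}$. Since the weight-truncation objects $\obn{\cdot}{\cdot}$ and $\obp{\cdot}{\cdot}$ are only \emph{choices} and not canonical, the phrase ``on peut choisir'' merely asks us to exhibit one admissible filtration realizing each equality.

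For the first equality, I would start from an arbitrary weight filtration of $X[a]$ at level $b$, that is a distinguished triangle $\tridis{\obn{X[a]}{b}}{X[a]}{\obp{X[a]}{b+1}}$ with $\obn{X[a]}{b}\in\CCn{b}$ and $\obp{X[a]}{b+1}\in\CCp{b+1}$. Applying $[c]$ and using $X[a][c]=X[a+c]$ yields a distinguished triangle $\tridis{\obn{X[a]}{b}[c]}{X[a+c]}{\obp{X[a]}{b+1}[c]}$. The shift identities above place its outer terms in $\CCn{b+c}$ and $\CCp{b+c+1}$ respectively, so this triangle is an admissible weight filtration of $X[a+c]$ at level $b+c$; one may therefore set $\obn{X[a+c]}{b+c}:=\obn{X[a]}{b}[c]$, which is the claim.

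For the second equality, I would run the identical argument one level lower, starting from a weight filtration $\tridis{\obn{X[a]}{b-1}}{X[a]}{\obp{X[a]}{b}}$ of $X[a]$ at level $b-1$, with $\obp{X[a]}{b}\in\CCp{b}$. Shifting by $[c]$ produces a distinguished triangle whose third term $\obp{X[a]}{b}[c]$ lies in $\CCp{b+c}$ and whose first term lies in $\CCn{b-1+c}$; this is an admissible weight filtration of $X[a+c]$ at level $b-1+c$, so we may set $\obp{X[a+c]}{b+c}:=\obp{X[a]}{b}[c]$.

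I do not expect any genuine obstacle: the argument uses nothing beyond the definitional shift-invariance of $\CCn{0}$ and $\CCp{0}$ together with the triangulated character of $[c]$. The only point deserving care is conceptual rather than technical, namely that the statement is an equality of \emph{compatible choices} of weight truncations, not an equality of canonically defined objects; I would make sure the wording reflects that we are producing one admissible filtration on each side rather than asserting uniqueness.
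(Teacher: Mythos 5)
Your proposal is correct and follows essentially the same route as the paper: one shifts a chosen weight filtration triangle of $X[a]$ by $[c]$ and uses the definitional identities $\CCn{b}[c]=\CCn{b+c}$, $\CCp{b+1}[c]=\CCp{b+c+1}$ to recognize the result as an admissible filtration of $X[a+c]$ (the paper reads both equalities off a single triangle by re-indexing $b$, whereas you run the argument twice, at levels $b$ and $b-1$, which is an immaterial difference). Your closing remark that the statement concerns compatible \emph{choices} rather than canonical objects matches the paper's caveat that $\Xn{n}$ and $\Xp{n+1}$ are not uniquely determined.
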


\begin{proof}
On a le triangle $\tridis{\obn{X[a]}{b}[c]}{X[a+c]}{\obp{X[a]}{b+1}[c]}$ et $\obn{X[a]}{b}[c]\in \CCn{b+c}$, $\obp{X[a]}{b+1}[c]\in \CCp{b+c+1}$. Ainsi le triangle précédent correspond à $\tridis{\obn{X[a+c]}{b+c}}{X[a+c]}{\obp{X[a+c]}{b+c+1}}$.
\end{proof} 

\begin{definition}[{comp. \cite[déf. 1.2.1.VI]{Bo2}}]
Soient $\CC$ et $\CC'$ des catégories triangulées, $c/\CC$, $c'/\CC'$ des structures de poids et $F:\morph{\CC}{\CC'}$ un foncteur de catégories triangulées.
\begin{description}
\item[$\bullet$] On dira que $F$ est \SW{$w$-exact à gauche} si $F$ transforme les objets de $\CC_{c \leqslant0}$ en objets de $\CC'_{c' \leqslant0}$.
\item[$\bullet$] On dira que $F$ est \SW{$w$-exact à droite} si $F$ transforme les objets de $\CC_{c \geqslant0}$ en objets de $\CC'_{c' \geqslant0}$.
\item[$\bullet$] On dira que $F$ est \SW{$w$-exact} s'il est $w$-exact à gauche et à droite.
\end{description}
\end{definition}

\subsection{Complexe de poids.}

\begin{proposition}[comp. {\cite[prop. 1.5.6.2]{Bo}}]\label{suitededefdeCP}
Soient $n\in \Z$ et $X\in \CC$. Alors pour tout $k\in\Z$, il existe un objet $\W^n_\CC(X)^k\in \CCe{n}$ et des morphismes
$$\alpha^k_+ : \morph{\obn{X[k]}{n-1}}{\obn{X[k]}{n}}\qquad\text{et}\qquad \alpha^k_- : \morph{\obp{X[k]}{n}}{\obp{X[k]}{n+1}}$$
tels que l'on ait les triangles distingués suivants 
$$\xymatrix@R=0cm{\obn{X[k]}{n-1}\ar[r]^{\alpha^k_-}&\obn{X[k]}{n}\ar[r]^{\beta^k_-}&\W^n_\CC(X)^k\ar[r]^{\gamma^k_-}&\obn{X[k]}{n-1}[1],\\
\obp{X[k]}{n}\ar[r]_{\alpha^k_+}&\obp{X[k]}{n+1}\ar[r]_{\beta^k_+}&\W^n_\CC(X)^k[1]\ar[r]_{\gamma^k_+}&\obp{X[k]}{n}[1].}$$
\end{proposition}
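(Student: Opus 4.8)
The statement is pointwise in $k$, so I would fix $k$, set $Y:=X[k]$ (again an arbitrary object of $\CC$) and, suppressing $k$, produce an object $\W:=\W^n_\CC(X)^k\in\CCe{n}$ fitting into the two triangles. The plan is to compare the weight filtrations of $Y$ at the two adjacent levels $n-1$ and $n$, to lift the negative part of the coarser one into the finer one, and to define $\W$ as the cone of that lift. So I would first fix, using $(SP4)$, weight filtration triangles
$$\tridis{\obn{Y}{n-1}}{Y}{\obp{Y}{n}}\qquad\text{and}\qquad\tridis{\obn{Y}{n}}{Y}{\obp{Y}{n+1}},$$
with $\obn{Y}{n-1}\in\CCn{n-1}$, $\obp{Y}{n}\in\CCp{n}$, $\obn{Y}{n}\in\CCn{n}$, $\obp{Y}{n+1}\in\CCp{n+1}$; write $a$, $a'$ for the two first morphisms into $Y$ and $b'$ for the second morphism of the level-$n$ triangle.

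Next I would build a morphism $\phi:\morph{\obn{Y}{n-1}}{\obn{Y}{n}}$ with $a'\phi=a$ (this $\phi$ is the map between the $\leqslant$-truncations appearing in the first triangle of the statement). The key input is the vanishing $\Hom_\CC(\obn{Y}{n-1},\obp{Y}{n+1})=0$: by $(SP2)$ one has $\obn{Y}{n-1}\in\CCn{n-1}\subset\CCn{n}$, while $\obp{Y}{n+1}\in\CCp{n+1}$, so weak orthogonality $(SP3)$ shifted by $n$ applies. Consequently $b'a=0$, and applying $\Hom_\CC(\obn{Y}{n-1},-)$ to the level-$n$ triangle exhibits $a$ as $a'\phi$ for some $\phi$.

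I would then feed the composite $\obn{Y}{n-1}\xrightarrow{\phi}\obn{Y}{n}\xrightarrow{a'}Y$ (whose composite is $a$) into the octahedral axiom, taking $\W$ to be the cone of $\phi$. Since the cone of $a'$ is $\obp{Y}{n+1}$ and the cone of $a=a'\phi$ is $\obp{Y}{n}$ (these are exactly the two filtration triangles above), the octahedron produces the distinguished triangle $\tridis{\W}{\obp{Y}{n}}{\obp{Y}{n+1}}$ together with the cone triangle $\tridis{\obn{Y}{n-1}}{\obn{Y}{n}}{\W}$. The cone triangle is the first triangle of the statement, and a rotation of the octahedral triangle is the second one, the induced arrow $\morph{\obp{Y}{n}}{\obp{Y}{n+1}}$ being the second morphism asked for.

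Finally I would verify $\W\in\CCe{n}=\CCn{n}\cap\CCp{n}$ from these two triangles. Rotating the cone triangle presents $\W$ as an extension of $\obn{Y}{n-1}[1]$ by $\obn{Y}{n}$; since $\obn{Y}{n}\in\CCn{n}$ and $\obn{Y}{n-1}[1]\in\CCn{n-1}[1]=\CCn{n}$, stability of $\CCn{n}$ under extensions (Proposition \ref{coeur.scinde} shifted by $n$) yields $\W\in\CCn{n}$. Symmetrically, rotating the octahedral triangle presents $\W$ as an extension of $\obp{Y}{n}$ by $\obp{Y}{n+1}[-1]$, with both terms in $\CCp{n}$ because $\CCp{n+1}[-1]=\CCp{n}$, whence $\W\in\CCp{n}$. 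The routine part is the shift bookkeeping $\CCn{n-1}[1]=\CCn{n}$, $\CCp{n+1}[-1]=\CCp{n}$ and the extension-stability invocations; the delicate step is reading the octahedron correctly, i.e. making sure the two cones in the octahedral triangle are exactly the $\geqslant$-parts of the two chosen filtrations, as it is precisely this that forces $\W$ into $\CCp{n}$ rather than merely $\CCn{n}$.
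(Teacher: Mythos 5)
Your proof is correct: the lift $\phi$ exists exactly because $\Hom_\CC(\obn{Y}{n-1},\obp{Y}{n+1})=0$ by $(SP2)$ and the $n$-shift of $(SP3)$, the octahedron on $a=a'\phi$ does yield both triangles of the statement (the cone triangle directly, the second one after rotation), and the membership $\W\in\CCe{n}$ follows from extension-stability with the shift bookkeeping $\CCn{n-1}[1]=\CCn{n}$, $\CCp{n+1}[-1]=\CCp{n}$ as you say. The route differs from the paper's in a way worth noting. The paper does not pick two filtrations of $Y=X[k]$ at adjacent levels and apply one octahedron; instead it compares the level-$(n-1)$ filtration of $X[k]$ with the $[1]$-shift of the level-$(n-1)$ filtration of $X[k-1]$ over $\Id_{X[k]}$, invokes \cite[prop. 1.1.9, prop. 1.1.11]{BBD} (the nine-lemma, itself a packaged double octahedron, with the same weak-orthogonality vanishing as input) to complete a full $3\times 3$ diagram whose last row and column produce $\W^n_\CC(X)^k$ and $\W^n_\CC(X)^k[1]$, and only then uses the $w$-distributivity principle (Proposition \ref{w.distrib}) to rewrite $\obn{X[k-1]}{n-1}[1]$ and $\obp{X[k-1]}{n}[1]$ as $\obn{X[k]}{n}$ and $\obp{X[k]}{n+1}$. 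Your argument is leaner and entirely sufficient for the proposition as stated: it avoids the nine-lemma and the distributivity step altogether. What the paper's heavier construction buys is the diagram itself: the $3\times 3$ array linking the filtrations of $X[k-1]$ and $X[k]$ is recalled verbatim in the proof of Proposition \ref{lm.diff.mult}, where its commutative and anti-commutative squares furnish the morphism $\varphi(X[k])$, the three descriptions of the differential $d^k_{\W_\CC(X)}$, and hence $d^{k+1}d^k=0$. A single octahedron at fixed $k$ does not relate consecutive degrees, so with your construction the definition of the weight complex in \ref{lm.diff.mult} would have to be re-derived separately.
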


\begin{proof}
Par orthogonalité, on peut appliquer \cite[prop. 1.1.9, prop. 1.1.11]{BBD} pour obtenir les flèches pointillées du diagramme
$$\xymatrix@C=2cm{\obn{X[k]}{n-1}\ar@{-->}[d]^{\alpha^k_-}\ar[r]&X[k]\ar[d]^{\Id_{X[k]}}\ar[r]&\obp{X[k]}{n}\ar@{-->}[d]^{\alpha^k_+}\ar[r]^{+1}&\\
\obn{X[k-1]}{n-1}[1]\ar[r]\ar@{-->}[d]&X[k]\ar[r]\ar[d]&\obp{X[k-1]}{n}[1]\ar@{-->}[d]\ar[r]^{+1}&\\
\W^n_\CC(X)^k\ar@{-->}[r]\ar@{-->}[d]^{+1}&0\ar@{-->}[r]\ar[d]^{+1}&\W^n_\CC(X)^k[1]\ar@{-->}[r]^{+1}\ar@{-->}[d]^{+1}&\\
&&&&}$$
où toutes les lignes et toutes les colonnes sont des triangles distingués. 

\noindent Le triangle distingué
$\tridis{\obn{X[k-1]}{n-1}[1]}{\W^n_\CC(X)^k}{\obn{X[k]}{n-1}[1]}$
justifie, par stabilité par extensions, que  $\W^n_\CC(X)^k\in \CCn{n}$.
De même, le triangle 
$\tridis{\obp{X[k-1]}{n}}{\W^n_\CC(X)^k}{\obp{X[k]}{n}}$
justifie que $\W^n_\CC(X)^k\in \CCp{n}$. 
Enfin on applique le principe de $w$-distributivité pour simplifier $\obn{X[k-1]}{n-1}[1]$ et $\obp{X[k-1]}{n}[1]$ (\cf \ref{w.distrib}).
\end{proof}

\begin{remarque}\label{w.distrib2}
Supposons que la structure de poids $w$ soit bornée et prenons un objet $X\in \CC$. Il existe donc des entiers $a$ et $b$ tels que $X\in \CCp{a}\cap\CCn{b}$ (on peut supposer $a\geqslant b$ par orthogonalité, à moins que l'objet $X$ soit $0$). On observe alors 
\begin{multicols}{2}
\begin{center}
si $b\leqslant k$ alors $\Xn{k}=X$,\\
si $b< k$ alors $\Xp{k}=0$,\\
si $a\geqslant k$ alors $\Xp{k}=X$,\\
si $a> k$ alors $\Xn{k}=0$.
\end{center}
\end{multicols}
Ainsi, d'aprés la proposition précédente on trouve deux filtrations de l'objet $X$ :
$$\xymatrix@C=0.5cm@R=0cm{\cdots\ar[r]&0\ar[r]&\Xn{a}\ar[r]&\Xn{a+1}\ar[r]&\cdots\ar[r]&\Xn{b-2}\ar[r]&\Xn{b-1}\ar[r]&X\ar[r]&X\ar[r]&\cdots\\
\cdots\ar[r]&X\ar[r]&X\ar[r]&\Xp{a+1}\ar[r]&\Xp{a+2}\ar[r]&\cdots\ar[r]&\Xp{b-1}\ar[r]&\Xp{b}\ar[r]&0\ar[r]&\cdots}$$
(tour de Postnikov, \cf \cite[déf 1.5.8]{Bo}).
On peut voir $\Xn{k}$ et $\Xp{k}$ comme des approximations de $X$ de sorte que $\W_\CC^n(X)^0$ et $\W_\CC^n(X)^0[1]$ mesure l'erreur commise entre deux approximations.

Le principe de $w$-distributivité permet les identifications suivantes 
$$\W_\CC^n(X[i])^k=\W_\CC^n(X)^{k+i},\qquad\W_\CC^n(X)^k[i]=\W_\CC^{n+i}(X)^{k+i}$$
quelques soient les entiers $k$, $i$ et $n$. De sorte qu'il suffit d'étudier $\W_\CC^0(X)^k$ que l'on note $\W_\CC(X)^k$.
\end{remarque}

\begin{proposition}[comp. {\cite[déf. 2.2.1, prop. 2.2.2.1]{Bo}}]\label{lm.diff.mult}
Soient $k\in\Z$ et $X\in \CC$. Notons $\varphi(X)$ le morphisme $\morph{\Xp{0}}{\obn{X[1]}{0}}$. On a les identifications suivantes
$$d_{\W_\CC(X)}^k :=\beta^{k+1}_-\varphi(X[k])(\gamma_+^{k}[-1])=(\beta^{k+1}_+[-1])(\gamma^{k}_+[-1])=-\beta^{k+1}_-\gamma^{k}_-.$$
De plus $\left(\W_\CC(X)^k,d_{\W_\CC(X)}^k\right)_{k\in \Z}$ définit un complexe de $\CCe{0}$. On l'appelle le \SW{complexe de poids} de $X$.
\end{proposition}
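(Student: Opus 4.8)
L'id\'ee est de lire les trois \'ecritures de $d_{\W_\CC(X)}^{k}$ sur le diagramme $3\times 3$ construit dans la preuve de la proposition \ref{suitededefdeCP}, puis d'en d\'eduire sans effort la condition de complexe. On fixe d'abord, comme dans la remarque \ref{w.distrib2}, des choix compatibles des objets $\Xn{m}$ et $\Xp{m}$, de sorte que les identifications de $w$-distributivit\'e (\ref{w.distrib}) employ\'ees ci-dessous portent sur des objets \'egaux. En particulier $\obn{X[k]}{-1}[1]=\obn{X[k+1]}{0}$ et $\obp{X[k+1]}{1}[-1]=\obp{X[k]}{0}$, et le morphisme $\varphi(X[k]):\morph{\obp{X[k]}{0}}{\obn{X[k+1]}{0}}$ s'identifie au morphisme de liaison de la filtration par le poids de $X[k]$ au cran $-1$, \ie du triangle $\tridis{\obn{X[k]}{-1}}{X[k]}{\obp{X[k]}{0}}$.

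Pour les \'egalit\'es, je reprends le diagramme $3\times 3$ de \ref{suitededefdeCP} au cran $k$ : ses lignes sont les filtrations par le poids de $X[k]$ aux crans $-1$ et $0$ (plus la ligne $\tridis{\W_\CC(X)^k}{0}{\W_\CC(X)^k[1]}$), ses colonnes sont les triangles dits $(\text{n\'eg.})$ et $(\text{pos.})$ de \ref{suitededefdeCP}. D'apr\`es \cite[prop. 1.1.11]{BBD}, huit des neuf carr\'es commutent et le dernier, celui des deux morphismes de liaison compos\'es, anticommute. L'anticommutativit\'e de ce carr\'e fournit pr\'ecis\'ement
$$\varphi(X[k])\circ(\gamma^k_+[-1])=-\gamma^k_-\quad\text{comme morphismes }\morph{\W_\CC(X)^k}{\obn{X[k+1]}{0}},$$
d'o\`u, en composant \`a gauche par $\beta^{k+1}_-$, la premi\`ere identification $\beta^{k+1}_-\varphi(X[k])(\gamma^k_+[-1])=-\beta^{k+1}_-\gamma^k_-$. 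Pour relier la deuxi\`eme \'ecriture, j'utilise le m\^eme diagramme au cran $k+1$ : sa ligne m\'ediane est exactement la filtration de $X[k]$ au cran $-1$ translat\'ee par $[1]$, de liaison $\varphi(X[k])[1]$, et la commutativit\'e du carr\'e reliant cette liaison aux colonnes $(\text{n\'eg.})$ et $(\text{pos.})$ donne $\beta^{k+1}_+[-1]=\beta^{k+1}_-\circ\varphi(X[k])$ ; en composant \`a droite par $\gamma^k_+[-1]$ on retombe sur la premi\`ere \'ecriture.

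Ces trois \'ecritures co\"incidant, $d_{\W_\CC(X)}^{k}$ est un morphisme bien d\'efini de $\CC$ entre objets de $\CCe{0}$ (proposition \ref{suitededefdeCP}), donc un morphisme de $\CCe{0}$, cette sous-cat\'egorie \'etant pleine. Reste la condition de complexe. En prenant la troisi\`eme \'ecriture pour les deux diff\'erentielles, on obtient
$$d_{\W_\CC(X)}^{k+1}\,d_{\W_\CC(X)}^{k}=\beta^{k+2}_-\,\gamma^{k+1}_-\,\beta^{k+1}_-\,\gamma^k_-,$$
et le facteur central $\gamma^{k+1}_-\beta^{k+1}_-$ est nul, $\beta^{k+1}_-$ et $\gamma^{k+1}_-$ \'etant deux fl\`eches cons\'ecutives du triangle distingu\'e $(\text{n\'eg.})$ au cran $k+1$ ; d'o\`u $d_{\W_\CC(X)}^{k+1}d_{\W_\CC(X)}^{k}=0$.

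Le point d\'elicat sera le contr\^ole des signes et des identifications de translation : il faudra s'assurer que le carr\'e anticommutant livre exactement le signe $-$ de la troisi\`eme \'ecriture et que les morphismes de liaison des troisi\`emes lignes (celles de la forme $\tridis{\W_\CC(X)^j}{0}{\W_\CC(X)^j[1]}$) valent $+\Id$ dans les deux diagrammes, afin que les deux relations obtenues au paragraphe pr\'ec\'edent soient compatibles et reproduisent les formules annonc\'ees.
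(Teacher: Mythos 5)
Your proposal is correct and takes essentially the same route as the paper: both read the three expressions for $d^k_{\W_\CC(X)}$ off the $3\times 3$ diagram of the proposition \ref{suitededefdeCP} after the $w$-distributivity identifications, invoking \cite[prop. 1.1.11]{BBD} for the commuting square giving $\beta^{k+1}_+[-1]=\beta^{k+1}_-\varphi(X[k])$ and the anticommuting square of connecting morphisms giving $\gamma^{k}_-=-\varphi(X[k])(\gamma^{k}_+[-1])$, and both deduce $d^{k+1}_{\W_\CC(X)}d^{k}_{\W_\CC(X)}=0$ from the vanishing of the composite $\gamma^{k+1}_-\beta^{k+1}_-$ of two consecutive arrows of a distinguished triangle. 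The sign bookkeeping you defer in your closing paragraph is handled no more explicitly in the paper's proof, so your argument matches it in both substance and level of detail.
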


\begin{proof}
Les différentes identifications de $d_{\W_\CC(X)}^k$ viennent du diagramme de définition de $\W_\CC(X)^k$. On rappelle qu'il est obtenu par le lemme des $9$ (\cite[prop. 1.1.11]{BBD}) d'où, en isolant la partie en bas à droite (à un décalage près), on a le diagramme
$$\xymatrix@C=2cm{\obp{X[k-1]}{0}\ar[r]^{\beta^{k}_+[-1]}\ar[d]_{\varphi(X[k-1])}&\W_\CC(X)^k\ar[r]^{\gamma^{k}_+[-1]}\ar[d]_{\Id_{\W_\CC(X)^k}}&\obp{X[k]}{0}\ar[d]^{\varphi(X[k])}\\
\obn{X[k]}{0}\ar[r]_{\beta^{k}_-}&\W_\CC(X)^k\ar[r]_{\gamma^{k}_-}&\obn{X[k+1]}{0}[1]}$$
qui n'est pas commutatif. Le carré de gauche est commutatif d'où $\beta^{k}_+[-1]=\beta^{k}_-\varphi(X[k-1])$ tandis que le carré de droite est anti-commutatif d'où $\gamma^{k}_-=-\varphi(X[k])(\gamma^{k}_+[-1])$. La dernière assertion résulte du fait que la composée de deux morphismes consécutifs d'un triangle distingué est nulle.
\end{proof}

\begin{theoreme}[comp. {\cite[thm. 3.2.2.II]{Bo}}]\label{def.fonct.complexe}
La transformation donnée par la règle
$$\xymatrix@C=1.5cm@R=0cm{
\W_\CC : \CC\ar[r]&\qhtt(\CCe{0})\\
X\ar@{|->}[r]&\W_\CC(X)
}$$
définit un foncteur additif.
\end{theoreme}

\begin{proposition}[comp. {\cite[thm. 3.3.1.IV]{Bo}}]\label{prop.Bo.3.3.1.IV}
Soient $a\in \Z$ et $X\in \CCn{a}$ (resp. $X\in \CCp{a}$) alors $$\W_\CC(X)\in \qhtt^{\geqslant -a}(\CCe{0}) \quad (\text{resp. } \W_\CC(X)\in \qhtt^{\leqslant -a}(\CCe{0})).$$
\end{proposition}

\begin{proof}
Il s'agit de voir que l'on peut choisir $\W_\CC(X)^k=0$ lorsque $k<-a$ ce qui peut se lire grace à \ref{w.distrib2}. 
\end{proof}

On renvoie à \cite[app.]{Bei.Fcat} pour la définition et les propriétés des $f$-catégories.

\begin{theoreme}[Beilinson-Bondarko ; comp. {\cite[\S 8.4]{Bo}}]\label{thm.BeiBon}
Soit $\CC$ une $f$-catégorie munie d'une structure de poids bornée $w/\CC$. Il existe un foncteur de catégories trinagulées qui factorise $\W_\CC : \morph{\CC}{\qhtt^b(\CCe{0})}$. On le note également $\W_\CC$.
$$\xymatrix@C=1.5cm@R=0.2cm{&\qhtt^b(\CCe{0})\\
\CC\ar[ru]^-{\W_\CC}\ar[rd]_-{\W_\CC}&\\
&\htt^b(\CCe{0})\ar@{->>}[uu]}$$
\end{theoreme}

\section{Motifs et poids.}
\addcontentsline{toc}{section}{\protect\numberline{\thesection} Motifs et poids.}

\subsection{Motifs de Beilinson.}\label{sec.rap.mot}

Dans la suite on choisit de se placer dans la catégorie des motifs de Beilinson $\DMB{S}$ suivant Cisinski-Déglise (\cite{CD}). Nous présentons un échantillon de propriétés de cette catégorie. En plus de \cite{CD}, on peut également se référer à \cite[\S 2]{Heb} et \cite[prop. 1.1.2]{Bo2}.

\begin{description}
\item[1. ] On a les opérations de Grothendieck (\cite[part 1]{CD}): si $f$ désigne un morphisme de schémas
$$(f_\sharp, \quad\text{si $f$ est lisse})\;\rightleftarrows\; f^*\;\rightleftarrows\;f_*\qquad f_! \;\rightleftarrows\; f^!\qquad \otimes_S\;\rightleftarrows\; \HHom_S$$
\item[2. ]
\setlength{\columnseprule}{1pt}
\begin{multicols}{2}
Soit
$$\xymatrix{W\immouv[r]^{j'}\ar[d]_{f}\ar@{}[rd]|\square&Y\ar[d]^f\\
U\immouv[r]_j&X}$$
un diagramme Nisnevich-distingué (\ie $j$ est une immersion ouverte et $f$ est un morphisme étale tel que $f^{-1}(X-U)_{red}\simeq(X-U)_{red}$)

\noindent(resp.
$$\xymatrix{T\immfer[r]^{i'}\ar[d]_{p'}\ar@{}[rd]|\square&Y\ar[d]^p\\
Z\immfer[r]_i&X}$$
un diagramme propre $cdh$-distingué (\ie $i$ est une immersion fermée et $p$ est un morphisme propre et surjectif tel que $p^{-1}(X-Z)_{red}\simeq(X-Z)_{red}$)).
\end{multicols}
D'aprés \cite[prop. 5.2.13]{CD} (resp. le théorème de Voevodsky \cite[thm. 3.3.7, \S 3.3.8]{CD}) on a le triangle
\begin{eqnarray*}
&&\tridis{(fj')_!(fj')^*M}{f_!f^*M\oplus j_!j^*M}{M}\\
(\text{resp.}&&\tridis{M}{ p_!p^*M\oplus i_!i^*M}{(pi')_!(pi')^*M}\quad).
\end{eqnarray*}
pour tout motif $M\in \DMB{X}$. On peut dualiser ces constructions (\cite[\S 3.3.3]{CD}):
\begin{eqnarray*}
&&\tridis{M}{f_*f^!M\oplus j_*j^!M}{(fj')_*(fj')^!M}\\
(\text{resp.}&&\tridis{(pi')_*(pi')^!N}{ p_*p^!M\oplus i_*i^!M}{M}\quad).
\end{eqnarray*}
\item[3. ] On a la pureté relative et absolue : si $f:\morph{X}{Y}$ est un morphisme de schémas lisse de dimension relative $d$ (resp. une immersion fermée de codimension $-d$ entre schémas réguliers) alors (\cite[thm. 2.4.15.$(iii)$, rm. 2.4.16, thm. 13.4.1]{CD})
$$f^!\Un_Y=\Un_X(d)[2d].$$
\item[4. ] Si $U$ est un ouvert de $S$ de fermé complémentaire $Z$, alors en notant $j:\immop{U}{S}$ et $i:\immcl{Z}{S}$ les immersions canoniques, on a les triangles distingués de localisation (\cite[prop. 2.3.3, thm. 2.2.14.$(2)$]{CD})
$$\tridis{j_!j^*M}{M}{i_!i^*M},\qquad\qquad\tridis{i_*i^!M}{M}{j_*j^!M}$$
pour tout motif $M\in \DMB{S}$.
\item[5. ] On a la $h$-descente : considérons le diagramme suivant, où les carrés sont cartésiens
$$\xymatrix{
Z'\immfer[r]\ar@/_/[rd]^a\ar[d]&X'\ar[d]^p&\immouv[l]\ar[d]U'\\
Z\immfer[r]_i&X&\immouv[l]U}$$
et où $p$ est une altération de Galois de groupe $G$ telle que génériquement $\morph{X'/G}{X}$ est fini, surjectif et radiciel (en particulier $X'$ et $X$ ont la même dimension), $U$ est normal et $\morph{U'}{U}$ est fini alors on a les triangles distingués (\cite[thm. 14.3.7, rm. 14.3.8]{CD})\\
$$\tridis{M}{i_!i^*M\oplus \left(p_!p^*M\right)^G}{\left(a_!a^*M\right)^G},\qquad\qquad\tridis{\left(a_*a^!M\right)_G}{i_*i^!M\oplus \left(p_*p^!M\right)_G}{M},$$
pour tout motif $M\in \DMB{S}$ (où $R_G$ et $R^G$ désignent des rétractes complémentaires l'un de l'autre ; $R=R_G\oplus R^G$).
\item[6. ] Lorsque $f:\morph{X}{S}$ est lisse, on pose $M_S(X):=f_\sharp\Un_X$. La catégorie des motifs \SW{constructibles} (\cite[déf. 1.4.7]{CD}) est la plus petite catégorie épaisse contenant
$$\DMB{S}\supset\G_S:=\left\{M_S(X)(n)\big|n\in \Z, f:\morph{X}{S}\hbox{ lisse}\right\}.$$
De plus les opérations de Grothendieck respectent les objets constructibles (\cite[thm. 14.1.31]{CD}).
\item[7. ] On note $\HB^n(S,\Q(m))$ la \SW{cohomologie de Beilinson} définie par (\cite[déf. 13.2.13]{CD}),
$$\HB^n(S,\Q(m)):=\Hom_{\DMBc{S}}(\Un_S,\Un_S(m)[n]),$$
quelques soient les entiers $m$ et $n$.
\item[8.] Notons $\pi:\morph{S}{B}$ le morphisme structural de $S$ (\cf introduction). Pour tout $f:\morph{X}{S}\in \Sch/S$, le foncteur de dualité locale est défini par (\cite[\S 14.3.30]{CD})
$$\xymatrix@R=0cm@C=1.7cm{\D_X:=\DMBc{X}^\opp\ar[r]&\DMBc{X}\\
M\ar@{|->}[r]&\HHom_X(M,(\pi f)^!\Un_B).}$$ 
Si $S$ est un schéma régulier, $\D_X$ échange $*$ et $!$ (\cite[cor. 14.3.31.$(d)$]{CD}) et pour tout motif constructible $M$ on a un isomorphisme $\D_X^2M\simeq M$ (\cite[cor. 14.3.31.$(b)$]{CD}).
\item[9. ] Soit $$\DMB{S}\supset\H_S:=\left\{f_!\Un_X(x)[2x]\big|x\in \Z,\;f:\morph{X}{S} \textrm{ propre à domaine régulier}\right\}.$$
\begin{description}
\item[$(i)$.] Il existe une unique structure de poids $W/\DMB{S}$ telle que les petites sommes d'objets de $\H_S$ sont dans $\DMBe{S}$.
\item[$(ii)$.] Il existe une unique structure de poids bornée $w/\DMBc{S}$ telle que $\H_S\subset\DMBce{S}$. Précisément $\DMBce{S}$ est l'enveloppe des rétractes de la plus petite catégorie additive contenant $\H_S$.
\item[$(iii)$.] Le foncteur d'inclusion canonique ${\DMBc{S}}\hookrightarrow{\DMB{S}}$ est $w$-exact.
\end{description}
De plus les opérations de Grothendieck qui sont des adjoints à gauche (resp. à droite,) sont $w$-exactes à gauche (resp. à droite) (\cite[thm. 3.3, thm. 3.7]{Heb})
\begin{quot}
Justifions que la structure de poids sur $\DMBc{S}$ est bornée. De manière générale, les structures de poids issues du théorème de construction de Bondarko (\cf \cite[thm. 4.3.2.II.1]{Bo}) sont bornées. Soit $\CC$ une catégorie triangulée satisfaisant la condition $(ii)$ de \cite[thm. 1.9]{Heb}. Montrons par exemple que les objets de $\CC$ sont bornés inférieurement : par hypothèse $\H$ engendre $\CC$ il suffit donc de montrer par récurrence sur $n\in \N$, que le objets de $\dpl{\H_n:=\SE_\CC^n\left(\bigcup_{k\in \Z}\H[k]\right)}$ sont bornés inférieurement. On rappelle que la construction nous donne 
$\dpl{\CCp{0}=\Kar\left(\SE_\CC\left(\bigcup_{k\geqslant 0}\H[k]\right)\right)}$.
Soit $X\in\H_0$ alors $X\in \H[i]$, c'est à dire qu'il s'\'ecrit $A[i]$ pour un objet $A$
de $\H$. Si $i$ est positif alors $A[i]\in\CCp{0}$. Si $i$ est n\'egatif alors
$A\in\H[-i]\subset\CCp{0}$ et donc
$X=A[i]\in\CCp{i}$.

Si $X\in\H_{n+1}$ alors il existe un triangle
distingu\'e $\tridis{A}{X}{B}$ o\`u $A$ et $B$ sont des objets de
$\H_n$ qui, par hypoth\`ese de r\'ecurrence, sont born\'es
inf\'erieurement. C'est \`a dire qu'il existe deux entiers $a$ et
$b$ tels que $A\in\CCp{a}$ et $B\in\CCp{b}$. On peut supposer que $a\leqslant b$ ; en particulier
$B\in \CCp{a}$ (axiome $(SP2)$). La conclusion suit de la stabilité par extension de $\CCp{a}$.
\end{quot}
\item[10. ] Il existe un foncteur 
$$\W_{\DMB{S}} : \morph{\DMB{S}}{\qhtt(\DMBe{S})}$$
tel que pour tout $M\in \DMBe{S}$, $\W_S(M)=M$ (concentré en degré $0$ ; \cf \cite[prop. 3.3.1]{Bo2},  \ref{def.fonct.complexe}). On note $\Chow{S}$ le c{\oe}ur de $w/\DMBc{S}$ et $\W_S$ la restriction du foncteur $\W_{\DMB{S}}$ à $\DMBc{S}$.
$$\W_S : \morph{\DMBc{S}}{\htt^b(\Chow{S})}$$
\begin{quot}
Justifions brièvement : par définition (\cf \cite[déf. 13.2.1]{CD}) la catégorie $\DMBc{S}$ s'identifie à une  sous-catégorie pleine (localisation) d'une catégorie dérivée d'une catégorie abélienne qui est une $f$-catégorie (\cf \cite[app. A 2]{Bei.Fcat}). Ainsi $\DMBc{S}$ est une $f$-catégorie (\cf \cite[prop. 2.2]{Wild.Fcat}) munie d'une structure de poids bornée. On applique \ref{thm.BeiBon}.
\end{quot}
\end{description}

On se référera à ces rappels avec la notation $(rap. i)$.

\subsection{Dualité.}
On rappel que l'opérateur de dualité est un foncteur de catégories triangulées contravariant.

\begin{lemme}\label{lm.Brad}
Soient $f:\morph{X}{S}$ un morphisme de schémas, $M\in \DMBc{X}$ et $(a,b)\in \Z^2$, 
$$\D_X(M(a)[b])=\D_XM(-a)[-b].$$
\end{lemme}

\begin{proof}
Pour tout motif $N\in \DMBc{X}$, 
\begin{eqnarray*}
\Hom_{\DMBc{X}}(N,\D_X(M(a)[b]))&=&\Hom_{\DMBc{X}}(N,\HHom_X(M(a)[b],(\pi f)^!\Un_B))\\
&=&\Hom_{\DMBc{X}}(N\otimes_X M(a)[b],(\pi f)^!\Un_B)\\
&=&\Hom_{\DMBc{X}}(N(a)[b]\otimes_X M,(\pi f)^!\Un_B)\\
&=&\Hom_{\DMBc{X}}(N(a)[b],\HHom_X(M,(\pi f)^!\Un_B))\\
&=&\Hom_{\DMBc{X}}(N(a)[b],\D_XM)\\
&=&\Hom_{\DMBc{X}}(N,\D_XM(-a)[-b]),
\end{eqnarray*}
ce qui permet de conclure.
\end{proof}

\begin{theoreme}\label{thm.conj.Heb}
Soit $f:\morph{X}{S}$ un morphisme projectif entre schémas réguliers ; alors $f_*f^!\Un_S\in\Chow{S}$.
\end{theoreme}

\begin{proof}
Puisque $S$ est régulier, $\Un_S\in \Chow{S}$ (\cite[thm. 3.7.$(v)$]{Heb}) et la $w$-exactitude à droite de $f_*$ et $f^!$ implique $f_*f^!\Un_S\in \DMBcp{S}$. Pour montrer que $f_*f^!\Un_S\in \DMBcn{S}$ on utilise \cite[rm. 3.5]{Heb} : on montre que $\Hom_{\DMBc{S}}(f_*f^!\Un_S,g_!\Un_Y(a)[b])=0$ pour tout morphisme $g:\morph{Y}{S}$ propre à domaine régulier et tout $(a,b)\in \Z^2$ tel que $b>2a$

Puisque $f$ est projectif, on a une factorisation en une immersion fermée et un morphisme lisse, 
où $P$ est régulier. On obtient alors :
\begin{multicols}{2}
\hspace*{3.25cm}$\Hom_{\DMBc{S}}(f_*f^!\Un_S,g_!\Un_Y(a)[b])$
$$\xymatrix@R=0.3cm@C=1cm{X\ar[rr]^f\immfer[dr]_i&&S\\
&P\ar[ru]_s&}\qquad\qquad
\xymatrix@R=0.3cm{Z\ar@{}[rd]|{\square}\ar[d]_{g'}\ar[r]^{s'}&Y\ar[d]^g\\
P\ar[r]_s&S}$$
\begin{eqnarray*}
&=&\Hom_{\DMBc{S}}(f_!f^!\Un_S,g_!\Un_Y(a)[b])\\
&=&\Hom_{\DMBc{S}}(s_!i_!i^!s^!\Un_S,g_!\Un_Y(a)[b])\\
&=&\Hom_{\DMBc{P}}(i_!i^!s^!\Un_S,s^!g_!\Un_Y(a)[b])\\
&=&\Hom_{\DMBc{P}}(i_!i^!s^!\Un_S,g'_!s'^!\Un_Y(a)[b]).
\end{eqnarray*}
\end{multicols}
On utilse la pureté relative $(rap. 3)$, pour obtenir $s'^!\Un_Y=\Un_Z(d')[2d']$ où $d'$ est la dimension relative de $s'$ et $s^!\Un_S=\Un_P(d)[2d]$ où $d$ est la dimension relative de $s$, puis on utilise la pureté absolue pour obtenir $i^!\Un_P=\Un_X(-c)[-2c]$ où $c$ est la codimension de $X$ dans $P$. Finalement, 
$$\Hom_{\DMBc{S}}(f_*f^!\Un_S,g_!\Un_Y(a)[b])=\Hom_{\DMBc{P}}(i_!\Un_X(d-c)[2(d-c)],g'_!\Un_Z(a+d')[b+2d'])$$
Mais $i_!\Un_X(d-c)[2(d-c)]\in \Chow{P}$ et $g'_!\Un_Z(a+d')[b+2d']\in\Chow{P}[b-2a]\subset\DMBc{P}_{w\geqslant 1}$. On conclut par orthogonalité.
\end{proof}

\begin{lemme}\label{rm.dual}
Supposons que $S$ est un schéma lisse au dessus de $B$, alors $\D_S(\Un_S)=\Un_S(s)[2s]$ pour $s$ la dimension relative de $\pi: \morph{S}{B}$.
\end{lemme}

\begin{proof}
Cela suit de la pureté relative $(rap. 3)$, $$\D_S(\Un_S)=\HHom_S(\Un_S,\pi^!\Un_B)=\HHom_S(\Un_S,\Un_S(s)[2s])=\HHom_S(\Un_S,\Un_S)(s)[2s]=\Un_S(s)[2s].$$
\end{proof}

Dans la suite de cette section, on suppose que $S$ est régulier et lisse au dessus de $B$.

\begin{corollaire}
Soit $f:\morph{X}{S}$ un morphisme propre à domaine régulier ; alors $f_*f^!\Un_S\in\Chow{S}$.
\end{corollaire}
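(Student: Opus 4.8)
The plan is to deduce this proper case from the projective Theorem~\ref{thm.conj.Heb} by duality: the two lemmas just proved are exactly the devices needed to move the statement across $\D_S$. The positivity $f_*f^!\Un_S\in\DMBcp{S}$ is free and identical to the projective case — $\Un_S\in\Chow{S}$ since $S$ is regular, and $f_*$, $f^!$ are $w$-exact to the right (rap.~9) — so the whole content is the negativity $f_*f^!\Un_S\in\DMBcn{S}$.

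First I would dualise. As $S$ is regular, $\D_S$ exchanges $*$ and $!$ (rap.~8), so $\D_Sf_*=f_!\D_X$ and $\D_Xf^!=f^*\D_S$; together with $\D_S\Un_S=\Un_S(s)[2s]$ (Lemma~\ref{rm.dual}) and Lemma~\ref{lm.Brad} this gives
$$\D_S(f_*f^!\Un_S)=f_!\,\D_X(f^!\Un_S)=f_!f^*\big(\Un_S(s)[2s]\big)=f_!\Un_X(s)[2s].$$
Because $f$ is proper and $X$ regular, $f_!\Un_X(s)[2s]$ is one of the generators listed in $\H_S$, hence lies in $\Chow{S}$ (rap.~9(ii)). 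Since $\D_S^2\simeq\Id$ on constructible objects (rap.~8) and $f_*f^!\Un_S=\D_S(f_!\Un_X(s)[2s])$, it now suffices to prove the single inclusion $\D_S(\DMBcp{S})\subseteq\DMBcn{S}$.

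To establish it I would use that $\D_S$ is a contravariant triangulated functor, hence exchanges retracts and (via $A\to B\to C\leadsto\D_SC\to\D_SB\to\D_SA$) extensions. By Bondarko's description $\DMBcp{S}=\Kar(\SE(\bigcup_{k\geqslant0}\H_S[k]))$, it is enough to check $\D_S(\H_S[k])\subseteq\DMBcn{S}$ for $k\geqslant0$. For a \emph{projective} generator $g:\morph{Y}{S}$ with $Y$ regular the same computation as above, using Lemma~\ref{lm.Brad}, yields $\D_S(g_!\Un_Y(x)[2x])=(g_*g^!\Un_S)(s-x)[2(s-x)]$, which lies in $\Chow{S}\subseteq\DMBcn{S}$ by Theorem~\ref{thm.conj.Heb}; shifting by $-k\leqslant0$ keeps it in $\DMBcn{S}$ (axiom $(SP2)$). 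Thus everything goes through \emph{provided $\H_S$ may be replaced by its projective subfamily} inside this bounded-below description.

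The main obstacle is exactly that replacement: one must show that $\DMBcp{S}$ is generated, under extensions and retracts and allowing only shifts $[k]$ with $k\geqslant0$, by the projective Chow motives $g_!\Un_Y(x)[2x]$ ($g$ projective, $Y$ regular). Here I would run an induction on $\dim Y$ using de Jong's theorem on alterations together with the $\Q$-linearity of $\DMB{\cdot}$: for a proper $Y$ choose a projective Galois alteration $Y'\to Y$ of group $G$ with $Y'$ regular and projective over $S$, so that $(g')_!\Un_{Y'}$ is a projective generator, and split off its $G$-invariant summand by the idempotent $\tfrac1{|G|}\sum_{h\in G}h$; the $h$-descent triangle (rap.~5) then presents $g_!\Un_Y$ in terms of that summand and of terms $(gi)_!\Un_Z,\ \dots$ supported on a closed $Z\subsetneq Y$ and its preimage $Z'$, both of strictly smaller dimension and treated by the induction hypothesis. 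The delicate point is to make the $\Q$-trace split the projective generator cleanly and to keep the lower-dimensional correction terms within the $k\geqslant0$ part of the weight filtration (note that in the heart any triangle of weight-$0$ objects splits, since $\Hom(\DMBcn{S},\DMBcp{S}[1])=0$ by $(SP3)$, which is what lets the corrections be absorbed as genuine summands). Once this is arranged, stability of $\Chow{S}$ under retracts ($(SP1)$) and extensions (Proposition~\ref{coeur.scinde}) closes the argument.
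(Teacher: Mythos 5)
Your first half runs exactly along the paper's line: the positivity via right $w$-exactness, the computation $\D_S(f_*f^!\Un_S)=f_!f^*\D_S\Un_S=f_!\Un_X(s)[2s]$ via (rap.~8), Lemme~\ref{rm.dual} and Lemme~\ref{lm.Brad}, and the reduction of everything to the projective case settled by Th\'eor\`eme~\ref{thm.conj.Heb}. But the step you flag as ``the main obstacle'' --- replacing the proper generators of $\H_S$ by projective ones --- is not a technicality to be improvised: it is precisely the \emph{lemme de Chow motivique} \cite[lm.~3.1]{Heb}, which the paper invokes as its only real input. That lemma says that for $f$ proper with regular domain, $f_!f^*\Un_S$ is a retract, \emph{with no correction terms}, of some $p_!p^*\Un_S$ with $p$ projective with regular domain; since $\D_S$ is additive it preserves retracts, so $f_*f^!\Un_S$ is a retract of $p_*p^!\Un_S\in\Chow{S}$ and one concludes by $(SP1)$, in three lines. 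Your inline attempt to reprove this replacement does not close as sketched.

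Concretely, rotating the $h$-descent triangle for a proper $g:\morph{Y}{S}$ gives $\tridis{g_!\left(a_!\Un_{Z'}\right)^G[-1]}{g_!\Un_Y}{(gi)_!\Un_Z\oplus g_!\left(p_!\Un_{Y'}\right)^G}$, and the correction terms live over $Z$ and $Z'$, which are in general \emph{singular}: by Th\'eor\`eme~\ref{thm.mesure.motifs} they only satisfy $w\in[-\dim Z,0]$, so they are not of weight $0$ and Proposition~\ref{coeur.scinde} does not apply --- the triangle has no reason to split, and your appeal to ``any triangle of weight-$0$ objects splits'' is vacuous here. Worse, after dualizing, those terms have weights in $[0,\dim Z]$ and the shift $[-1]$ becomes $[1]$, so the bound degrades at each induction step: what your induction actually proves is $\D_S(g_!\Un_Y)\in\DMBc{S}_{w\leqslant \dim Y}$ --- a re-proof of Corollaire~\ref{thm.mesure.motifs.2} --- and never $w\leqslant 0$. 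To get weight $\leqslant 0$ the regularity of the domain must produce an honest splitting at the level of motives (a trace/degree argument with $\Q$-coefficients after Chow's lemma and de Jong), which is exactly the content of \cite[lm.~3.1]{Heb}; as written, your ``provided $\H_S$ may be replaced by its projective subfamily'' assumes that lemma's conclusion. Note also that your intermediate target $\D_S(\DMBcp{S})\subseteq\DMBcn{S}$ is (half of) Corollaire~\ref{prop.dual.conj}, which the paper deduces \emph{from} the present corollary; your reduction thus runs the paper's logic backwards, and that inclusion is not available at this point by any route that avoids the Chow lemma.
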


\begin{proof}
D'aprés le lemme de Chow motivique (\cite[lm. 3.1]{Heb}) $f_!f^*\Un_S$ est un rétracte d'un motif $p_!p^*\Un_S$ pour un certain morphisme $p:\morph{X_0}{S}$ projectif à domaine régulier. L'opérateur de dualité étant additif, on en déduit que $f_*f^!\Un_S=\D_S(f_!f^*\Un_S)(-s)[-2s]$ est un rétracte de $p_*p^!\Un_S=\D_S(p_!p^*\Un_S)(-s)[-2s]$. On conclut avec le théorème précédent et la stabilité par rétractes du c{\oe}ur.
\end{proof}

\begin{corollaire}\label{prop.dual.conj}
Quelque soit $n\in \Z$, le foncteur $\D_S$ induit une équivalence de catégories entre $\DMBc{S}_{w\leqslant n}^\opp$ et $\DMBc{S}_{w\geqslant -n}$.
\end{corollaire}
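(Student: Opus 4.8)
Le plan est de montrer d'abord que $\D_S$ pr\'eserve le c{\oe}ur $\Chow{S}$, puis de propager cette propri\'et\'e \`a toute la structure de poids gr\^ace \`a la description par g\'en\'erateurs de $\DMBcn{S}$ et $\DMBcp{S}$ $(rap.~9)$, et enfin d'invoquer l'involution $\D_S^2\simeq\Id$ $(rap.~8)$ pour en d\'eduire une \'equivalence. Le cas d'un entier $n$ quelconque s'obtiendra ensuite par translation.

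La premi\`ere \'etape, qui est le c{\oe}ur de la preuve, consiste \`a \'etablir $\D_S(\Chow{S})\subseteq\Chow{S}$. Soient $f:\morph{X}{S}$ un morphisme propre \`a domaine r\'egulier et $x\in\Z$, de sorte que $f_!\Un_X(x)[2x]$ d\'ecrit un g\'en\'erateur de $\H_S$. Comme $\D_S$ \'echange $f_!$ et $f_*$ $(rap.~8)$, on a $\D_S(f_!\Un_X(x)[2x])=f_*\D_X(\Un_X(x)[2x])$, et le lemme \ref{lm.Brad} donne $\D_X(\Un_X(x)[2x])=\D_X(\Un_X)(-x)[-2x]$. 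En notant $\pi:\morph{S}{B}$ le morphisme structural et $s$ sa dimension relative, on calcule $\D_X(\Un_X)=(\pi f)^!\Un_B=f^!\pi^!\Un_B=f^!\D_S(\Un_S)=(f^!\Un_S)(s)[2s]$ d'apr\`es le lemme \ref{rm.dual}. On en tire
$$\D_S(f_!\Un_X(x)[2x])=\left(f_*f^!\Un_S\right)(s-x)[2(s-x)].$$
Le corollaire pr\'ec\'edent assure que $f_*f^!\Un_S\in\Chow{S}$ ; comme le foncteur $M\mapsto M(a)[2a]$ envoie $\H_S$ dans lui-m\^eme, il pr\'eserve $\Chow{S}=\Kar(\text{add}(\H_S))$ $(rap.~9.(ii))$, d'o\`u $\D_S(f_!\Un_X(x)[2x])\in\Chow{S}$. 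Le foncteur $\D_S$ \'etant additif et pr\'eservant les r\'etractes, et $\Chow{S}$ \'etant l'enveloppe des r\'etractes de la cat\'egorie additive engendr\'ee par $\H_S$, on conclut $\D_S(\Chow{S})\subseteq\Chow{S}$.

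Il reste \`a propager ce r\'esultat. Puisque $\D_S$ est un foncteur de cat\'egories triangul\'ees contravariant, $\D_S(M[k])=\D_S(M)[-k]$, d'o\`u $\D_S(\H_S[k])\subseteq\Chow{S}[-k]$. La construction de Bondarko fournit $\DMBcn{S}=\Kar(\SE(\bigcup_{k\leqslant 0}\H_S[k]))$, variante sym\'etrique de la formule $\DMBcp{S}=\Kar(\SE(\bigcup_{k\geqslant 0}\H_S[k]))$ rappel\'ee en $(rap.~9)$. Comme $\D_S$ transforme extensions en extensions et r\'etractes en r\'etractes, et que $\Chow{S}[-k]\subseteq\DMBcp{S}$ pour $k\leqslant 0$, il vient $\D_S(\DMBcn{S})\subseteq\DMBcp{S}$, puis $\D_S(\DMBcp{S})\subseteq\DMBcn{S}$ par sym\'etrie. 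L'isomorphisme $\D_S^2\simeq\Id$ $(rap.~8)$ montre alors que $\D_S$ induit une \'equivalence $\DMBcn{S}^\opp\simeq\DMBcp{S}$. Enfin, pour $n\in\Z$ quelconque, $\DMBc{S}_{w\leqslant n}=\DMBcn{S}[n]$ et la relation $\D_S(M[n])=\D_S(M)[-n]$ donnent $\D_S(\DMBc{S}_{w\leqslant n})\subseteq\DMBcp{S}[-n]=\DMBc{S}_{w\geqslant -n}$, d'o\`u l'\'equivalence annonc\'ee. L'obstacle principal sera le calcul du c{\oe}ur ci-dessus : il faudra suivre pr\'ecis\'ement twists et d\'ecalages et appliquer le corollaire pr\'ec\'edent au bon endroit.
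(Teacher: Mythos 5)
Votre preuve est correcte et suit pour l'essentiel la m\^eme strat\'egie que l'article : dualiser les g\'en\'erateurs en utilisant le corollaire pr\'ec\'edent ($f_*f^!\Un_S\in\Chow{S}$ pour $f$ propre \`a domaine r\'egulier), propager par extensions et r\'etractes via la description \`a la Bondarko des g\'en\'erateurs de la structure de poids (rm.~3.5 de H\'ebert), puis conclure par $\D_S^2\simeq\Id$ et un d\'ecalage pour traiter $n$ quelconque. Les seules diff\'erences sont cosm\'etiques : vous red\'emontrez l'inclusion $\D_S(\DMBcn{S})\subseteq\DMBcp{S}$ que l'article importe directement de [H\'ebert, cor.~3.8] (ce qui rend votre argument plus autonome), et vous r\'eduisez le cas g\'en\'eral par translation plut\^ot que via le lemme \ref{lm.Brad}.
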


\begin{proof}
D'aprés \cite[cor. 3.8]{Heb}, on sait que le foncteur $\D_S$ induit un foncteur $\morph{\DMBc{S}^{\opp}_{w\leqslant n}}{\DMBc{S}_{w\geqslant -n}}$. Puisque $\D_S^2M\simeq M$, il suffit de montrer que $\D_S$ induit un foncteur $\morph{\DMBc{S}^{\opp}_{w\geqslant n}}{\DMBc{S}_{w\leqslant -n}}$ ; c'est-à-dire qu'il faut montrer que si $M\in \DMBc{S}^{\opp}_{w\geqslant n}$ alors $\D_SM\in \DMBc{S}^{\opp}_{w\leqslant -n}$. Via \ref{lm.Brad}, il suffit de prouver le résultat pour $n=0$. 
\begin{description}
\item[$\bullet$] Soit $M\in \Pos_S$, c'est-à-dire que $M=f_!f^*\Un_S(a)[b]$ pour un certain morphisme $f:\morph{X}{S}$ propre à domaine régulier et des entiers $a$ et $b$ tels que $b\geqslant 2a$. Alors, $\D_SM=\D_S(f_!f^*\Un_S(a)[b])=f_*f^!\Un_S(s-a)[2s-b]$ (\cf \ref{rm.dual}). Mais par le corollaire précédent $f_*f^!\Un_S(s-a)[2s-b]\in\Chow{S}[2a-b]\subset\DMBcn{S}$.
\item[$\bullet$] On va montrer par récurrence sur $n\in \N$ que les objets de $\H_n:=\SE_{\DMBc{S}}^n\left(\Kar(\Pos_S)\right)$ sont envoyés par $\D_S$ dans $\DMBcn{S}$. Pour $n=0$, il s'agit de le montrer pour les rétractes de $\Pos_S$, mais cela suit du point précédent et de la stabilité par rétractes.
Soit $M\in \H_{n+1}$, alors il existe un triangle distingué $\tridis{A}{M}{B}$ où $A$ et $B$ sont des objets de $\H_n$ ; appliquant $\D_S$ on obtient 
$\tridis{\D_SB}{\D_SM}{\D_SA}$. On conclut par stabilité par extensions (\cf \ref{coeur.scinde}).
\item[$\bullet$] D'aprés \cite[rm. 3.5]{Heb}, $\dpl{\DMBcn{S}=\Kar\left(\bigcup_{n\in\N}\H_n\right)}$. L'additivité de $\D_S$ permet de conclure.
\end{description}
\end{proof}

\subsection{Poids de motifs.}

\begin{theoreme}\label{thm.mesure.motifs}
Soit $f:\morph{X}{S}$ un morphisme de schémas tel que $\Dim(X)=d$, alors
$$f_!\Un_X\in \DMBc{S}_{w\geqslant-d}\cap\DMBcn{S}.$$
\end{theoreme}

\begin{proof}
Puisque $\Un_X$ est un objet de poids au plus $0$ (\cf \cite[thm. 3.7.$(v)$]{Heb}) et que $f_!$ est $w$-exact à gauche alors $f_!\Un_X\in \DMBcn{S}$. 
\begin{description}
\item[$\bullet$] Supposons dans un premiers temps que $f$ est propre et montrons par récurrence sur $d$ que $f_!\Un_X\in \DMBc{S}_{w\geqslant-d}$. 
On considère une altération de Galois, comme dans $(rap. 5)$ (qui existe en vertue de \cite[thm. 14.3.6]{CD}) avec $M=\Un_X$ pour obtenir le triangle distingué
$\tridis{\Un_X}{i_!\Un_Z\oplus \left(p_!\Un_{X'}\right)^G}{\left(a_!\Un_{Z'}\right)^G}$
qui donne en décalant et en composant par $f_!$ le triangle
$$\tridis{f_!\left(a_!\Un_{Z'}\right)^G[-1]}{f_!\Un_X}{(fi)_!\Un_Z\oplus f_!\left(p_!\Un_{X'}\right)^G}$$
Par construction $Z'$ et $Z$ sont de dimension au plus $d-1$ ; puisque $fa$ est propre on peut appliquer l'hypothèse de récurrence pour en déduire que 
$(fa)_!\Un_{Z'}\in\DMBc{S}_{w\geqslant-\Dim(Z')}\subset\DMBc{S}_{w\geqslant-d+1}$ (axiome $(SP2)$). Ainsi la stabilité par rétractes (axiome $(SP1)$) donne $f_!\left(a_!\Un_{Z'}\right)^G\in \DMBc{S}_{w\geqslant-d+1}$ d'où $f_!\left(a_!\Un_{Z'}\right)^G[-1]\in \DMBc{S}_{w\geqslant-d}$. De même $(fi)_!\Un_Z \in \DMBc{S}_{w\geqslant-d}$. Puisque $fp$ est propre et $X'$ régulier, $(fp)_!\Un_{X'}\in \DMBce{S}\subset\DMBc{S}_{w\geqslant-d}$ ; il en va de même pour le rétracte $f_!\left(p_!\Un_{X'}\right)^G$ de sorte que $(fi)_!\Un_Z\oplus f_!\left(p_!\Un_{X'}\right)^G\subset \DMBc{S}_{w\geqslant-d}$ via la stabilité par extensions ce qui permet également de conclure.
\item[$\bullet$] De manière générale, on raisonne également par récurrence sur $d$. On choisit une compactification $\overline{X}$ de $X$ telle que $\Dim(\overline{X})=\Dim(X)=d$ (\cf \cite[\S 4 thm. 2]{Nagata}). On a alors le diagramme commutatif
\begin{multicols}{2}
$$\xymatrix{X\ar[rd]_f\immouv[r]^j&\overline{X}\ar[d]_p&\partial\overline{X}\immfer[l]_i\ar[dl]^g\\
&S&}$$

\columnbreak

Composant le triangle de localisation $(rap. 4)$ par $p_!$ on arrive au triangle distingué
$\tridis{g_!\Un_{\partial\overline{X}}[-1]}{f_!\Un_X}{p_!\Un_{\overline{X}}}$.
Par hypothèse de récurrence $g_!\Un_{\partial\overline{X}}[-1]\in \DMBc{S}_{w\geqslant-d}$ et puisque $p$ est propre le point précédent donne $p_!\Un_{\overline{X}}\in \DMBc{S}_{w\geqslant-d}$. On conclut via la stabilité par extension.
\end{multicols}
\end{description}
\end{proof}

\begin{corollaire}\label{thm.mesure.motifs.2}
Supposons que $S$ est un schéma régulier et lisse au dessus de $B$.
Pour tout morphisme $f:\morph{X}{S}$, où $X$ est un schéma de dimension $d$, 
$$f_*f^!\Un_S\in \DMBcp{S}\cap\DMBc{S}_{w\leqslant d}.$$
\end{corollaire}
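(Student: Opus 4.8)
The plan is to deduce this corollary from Theorem \ref{thm.mesure.motifs} by passing to the dual motive. Since $S$ is regular, the duality functor satisfies $\D_S^2 M\simeq M$ for every constructible $M$ $(rap.~8)$, and by Corollaire \ref{prop.dual.conj} it exchanges the two halves of the weight structure. So it suffices to compute $\D_S(f_*f^!\Un_S)$, to read off its weight bounds from Theorem \ref{thm.mesure.motifs}, and then to transport those bounds back through $\D_S$.

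First I would carry out the duality computation. As $S$ is regular, $\D_X$ exchanges $*$ and $!$ $(rap.~8)$; in the relative setting this is exactly the pair of commutation rules $\D_S f_* = f_!\,\D_X$ and $\D_X f^! = f^*\,\D_S$. Applying them in succession gives
$$\D_S(f_*f^!\Un_S) = f_!\,\D_X(f^!\Un_S) = f_!\,f^*\,\D_S(\Un_S).$$
Since $S$ is lisse over $B$, Lemma \ref{rm.dual} yields $\D_S(\Un_S)=\Un_S(s)[2s]$ with $s$ the relative dimension of $\pi$, whence $f^*\D_S(\Un_S)=\Un_X(s)[2s]$. Commuting the Tate twist and the shift through $f_!$ (projection formula), I obtain
$$\D_S(f_*f^!\Un_S) = (f_!\Un_X)(s)[2s].$$

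Next I would bound the weights of the right-hand side. By Theorem \ref{thm.mesure.motifs}, $f_!\Un_X\in \DMBc{S}_{w\geqslant -d}\cap\DMBcn{S}$. The Tate twist $-(1)[2]$ maps $\H_S$ bijectively onto itself $(rap.~9)$ and is a triangulated autoequivalence, so it is $w$-exact and therefore preserves both bounds; consequently $(f_!\Un_X)(s)[2s]\in \DMBc{S}_{w\geqslant -d}\cap\DMBcn{S}$. Now I apply Corollaire \ref{prop.dual.conj}: it exchanges $\DMBcn{S}=\DMBc{S}_{w\leqslant 0}$ with $\DMBcp{S}=\DMBc{S}_{w\geqslant 0}$, and $\DMBc{S}_{w\geqslant -d}$ with $\DMBc{S}_{w\leqslant d}$. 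Applying $\D_S$ once more and using $\D_S^2\simeq\Id$,
$$f_*f^!\Un_S = \D_S\left((f_!\Un_X)(s)[2s]\right) \in \DMBcp{S}\cap\DMBc{S}_{w\leqslant d},$$
which is the claim.

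The step I expect to be the main obstacle is the duality computation: one must make sure that the two commutation rules $\D_S f_*=f_!\D_X$ and $\D_X f^!=f^*\D_S$ are precisely what ``$\D_X$ exchanges $*$ and $!$'' provides, and that they are available here, which rests on the regularity of $S$. Once this identity is established, everything else is formal: the $w$-exactness of the Tate twist and the transfer of the two bounds through $\D_S$ rely only on Corollaire \ref{prop.dual.conj} and the description of $\Chow{S}$ recalled in $(rap.~9)$.
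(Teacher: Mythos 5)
Your proposal is correct and is essentially the paper's proof: the paper likewise applies $\D_S$ to $f_!f^*\Un_S=f_!\Un_X$ and combines Theorem \ref{thm.mesure.motifs} with Corollaire \ref{prop.dual.conj} (and Lemma \ref{rm.dual} for the twist $(s)[2s]$, which is harmless since $-(1)[2]$ preserves $\H_S$ and hence the weights). Your only cosmetic difference is running the duality in the opposite direction, dualizing $f_*f^!\Un_S$ and invoking biduality $\D_S^2\simeq\Id$, which is available here precisely because $S$ is regular $(rap.~8)$.
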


\begin{proof}
On applique $\D_S$ au motif $f_!f^*\Un_S$, puis \ref{thm.mesure.motifs} et \ref{prop.dual.conj}.
\end{proof}

\begin{remarque}
Pour $S=\Spec(k)$, pour un corps parfait $k$ admettant la résolution des singularités, le théorème \ref{thm.mesure.motifs} et le corollaire \ref{thm.mesure.motifs.2} étaient déjà connus (\cite[thm. 6.2.1]{Bo3}, \cite[cor. 1.14]{Wild}).
\end{remarque}

On peut avoir une partie du corollaire \ref{thm.mesure.motifs.2} avec moins d'hypothèse sur $S$.

\begin{definition}
On dira qu'un schéma $S$ est \SW{pseudo-régulier} si $\Un_S\in\Chow{S}$.
\end{definition}

\begin{remarque}
Si $S$ est un schéma régulier alors il est pseudo-régulier (\cf \cite[thm. 3.7.$(v)$]{Heb}). Il semblerait que la réciproque soit fausse ($S=\Spec(\C[X]/X^2)$).
\end{remarque}

Comme pour les schémas réguliers, on a une caractérisation relative des schémas pseudo-réguliers.

\begin{proposition}\label{prop.pseudo-reg.relatif}
Soit $f:\morph{S}{T}$ un morphisme lisse de schémas. Si $T$ est pseudo-régulier alors $S$ aussi. 
\end{proposition}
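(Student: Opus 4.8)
The plan is to exploit the smoothness of $f$ in order to show that the pullback functor $f^*$ is $w$-exact on constructible motives, and then simply to observe that $\Un_S = f^*\Un_T$.

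First I would recall, from the list of Grothendieck operations $(rap. 1)$, that for a smooth morphism $f$ one has the full adjoint triple $f_\sharp \dashv f^* \dashv f_*$; moreover all these functors preserve constructibility $(rap. 6)$, so they restrict to the constructible categories. In particular $f^* : \morph{\DMBc{T}}{\DMBc{S}}$ is simultaneously a left adjoint (of $f_*$) and a right adjoint (of $f_\sharp$), the latter being available precisely because $f$ is smooth. This double nature of $f^*$ is the whole point.

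The heart of the matter is then immediate from $(rap. 9)$: among the Grothendieck operations, those that are left adjoints are left $w$-exact, and those that are right adjoints are right $w$-exact. Applying this to $f^*$ in both of its incarnations gives $f^*(\DMBcn{T}) \subset \DMBcn{S}$ and $f^*(\DMBcp{T}) \subset \DMBcp{S}$; hence $f^*$ carries the heart $\Chow{T} = \DMBce{T}$ into $\Chow{S} = \DMBce{S}$, that is, $f^*$ is $w$-exact.

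To conclude, since $f^*$ is monoidal one has $f^*\Un_T = \Un_S$. By hypothesis $T$ is pseudo-regular, i.e. $\Un_T \in \Chow{T}$, so $\Un_S = f^*\Un_T \in \Chow{S}$, which is exactly the pseudo-regularity of $S$. The only delicate point is the verification that $f^*$ is $w$-exact on \emph{both} sides: this is where the smoothness hypothesis is indispensable, for it is smoothness that produces the extra left adjoint $f_\sharp$ and thereby makes $f^*$ itself a right adjoint. Without this, one would obtain merely the inclusion $\Un_S \in \DMBcn{S}$ (from $f^*$ being a left adjoint of $f_*$) and not the two-sided membership $\Un_S \in \DMBce{S}$ that pseudo-regularity demands.
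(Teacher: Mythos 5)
Your proposal is correct and follows essentially the same route as the paper: the paper's proof consists precisely of the $w$-exactitude of $f^*$ for $f$ lisse (cited as \cite[thm. 3.7.$(ii')_c$]{Heb}, which is exactly the statement you recover from the adjoint triple $f_\sharp \dashv f^* \dashv f_*$ via the principle recorded in $(rap.\ 9)$) together with $f^*\Un_T=\Un_S$. Your unpacking of \emph{why} smoothness yields two-sided $w$-exactness --- namely that $f_\sharp$ makes $f^*$ a right adjoint as well --- is a faithful reconstruction of the content behind the paper's citation, not a different argument.
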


\begin{proof}
Cela suit de la $w$-exactitude de $f^*$ (\cite[thm. 3.7.$(ii')_c$]{Heb}) et de $f^*\Un_T=\Un_S$.
\end{proof}

\begin{proposition}\label{thm.mesure.motifs.3}
Soient $S$ un schéma pseudo-régulier et $f:\morph{X}{S}$ un morphisme de schémas. Alors
$$f_*f^!\Un_S\in \DMBcp{S}.$$
\end{proposition}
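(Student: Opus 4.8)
Le plan est de reprendre textuellement la première partie de la démonstration du théorème \ref{thm.conj.Heb}, où l'on établissait déjà l'appartenance $f_*f^!\Un_S\in\DMBcp{S}$. En effet, le seul ingrédient qui y servait à obtenir cet encadrement par le bas était, d'une part, que $\Un_S$ soit un objet du cœur et, d'autre part, la $w$-exactitude à droite des foncteurs $f_*$ et $f^!$ ; ni la projectivité de $f$, ni la régularité de $X$ n'intervenaient dans cette étape. C'est précisément cette observation qui permet d'affaiblir les hypothèses.

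Concrètement, je commencerais par invoquer la définition de la pseudo-régularité : elle donne directement $\Un_S\in\Chow{S}=\DMBce{S}$, donc \emph{a fortiori} $\Un_S\in\DMBcp{S}$, puisque le cœur est l'intersection $\DMBce{S}=\DMBcn{S}\cap\DMBcp{S}$. J'observerais ensuite que $f^!$ est adjoint à droite de $f_!$ et que $f_*$ est adjoint à droite de $f^*$. D'après $(rap. 9)$, les opérations de Grothendieck qui sont des adjoints à droite sont $w$-exactes à droite, c'est-à-dire qu'elles préservent les objets de poids $\geqslant 0$. En appliquant successivement ces deux foncteurs, on tire de $\Un_S\in\DMBcp{S}$ l'appartenance $f^!\Un_S\in\DMBcp{X}$, puis $f_*f^!\Un_S\in\DMBcp{S}$, ce qui conclut.

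La démonstration ne présente pas de réelle difficulté, l'essentiel du travail ayant été accompli en amont dans l'établissement de la structure de poids et de sa compatibilité aux six opérations. Le seul point à soigner est d'ordre logique : il faut vérifier que la régularité de $S$, utilisée dans le théorème \ref{thm.conj.Heb}, peut effectivement y être remplacée par la seule hypothèse $\Un_S\in\Chow{S}$, qui n'est autre que la définition de la pseudo-régularité. Cet affaiblissement explique aussi pourquoi l'énoncé ne fournit que la borne inférieure du poids : la borne supérieure du corollaire \ref{thm.mesure.motifs.2} repose, elle, sur la dualité $\D_S$ et le corollaire \ref{prop.dual.conj}, donc sur la régularité et la lissité de $S$, hypothèses que l'on ne suppose pas ici.
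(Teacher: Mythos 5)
Votre démonstration est correcte et suit exactement la même voie que celle du papier, qui se réduit précisément à la $w$-exactitude à droite de $f_*$ et $f^!$ (adjoints à droite, $(rap.\ 9)$) appliquée à $\Un_S\in\Chow{S}\subset\DMBcp{S}$, donné par la pseudo-régularité. Vos remarques complémentaires — que la régularité de $S$ dans le théorème \ref{thm.conj.Heb} ne servait qu'à garantir $\Un_S\in\Chow{S}$, et que la borne supérieure du corollaire \ref{thm.mesure.motifs.2} requiert la dualité, donc des hypothèses plus fortes — sont également exactes.
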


\begin{proof}
Cela suit de la $w$-exactitude à droite de $f_*$ et $f^!$.
\end{proof}

\begin{corollaire}\label{cor.ann.coh}
Soient $f:\morph{X}{S}$, $g:\morph{Y}{S}$ des morphismes de schémas, $d_X=\Dim(X)$, $d_Y=\Dim(Y)$, $d_S=\Dim(S)$ et $(a,b)\in \Z^2$.
\begin{description}
\item[$(i).$] Si $b>2a+d_Y$ alors
$\Hom_{\DMBc{S}}(f_!f^*\Un_S,g_!g^*\Un_S(a)[b])=0$.
\item[$(i)'$.] Si $b>2a+d_S$ alors $\HB^b(S,\Q(a))=0$.
\item[$(ii)$.] Si $S$ est pseudo-régulier et $b>2a$ alors
$\Hom_{\DMBc{S}}(f_!f^*\Un_S,g_*g^!\Un_S(a)[b])=0$.
\item[$(ii)'$.] Si $S$ est pseudo-régulier et $b>2a$ alors
$\HB^b(S,\Q(a))=0$.
\end{description}
Supposons de plus que $S$ est régulier et lisse au dessus de $B$.
\begin{description}
\item[$(iii)$.] Si $b>2a+d_X$ et $b<2a-d_Y$, $$\Hom_{\DMBc{S}}(f_*f^!\Un_S,g_*g^!\Un_S(a)[b])=\Hom_{\DMBc{S}}(f_!f^*\Un_S,g_!g^*\Un_S(-a)[-b])=0.$$
\item[$(iii)'$.] Si $b>2a$ et $b<2a-d_S$, $\HB^b(S,\Q(a))=0$.
\item[$(iv)$.] Si $b>2a+d_X+d_Y$ et $b<2a$, $$\Hom_{\DMBc{S}}(f_*f^!\Un_S,g_!g^*\Un_S(a)[b])=\Hom_{\DMBc{S}}(f_!f^*\Un_S,g_*g^!\Un_S(-a)[-b])=0.$$
\end{description}
\end{corollaire}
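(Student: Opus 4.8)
The plan is to deduce each of the four vanishing statements from the weight bounds already obtained in Theorems \ref{thm.mesure.motifs} and \ref{thm.mesure.motifs.2}, combined with the weak orthogonality axiom $(SP3)$ of the weight structure. The unifying mechanism is: if one motive lies in $\DMBc{S}_{w\geqslant m}$ and the other (after twisting and shifting by $(a)[b]$) lies in $\DMBcn{S}$ with a strict gap, then $\Hom$ between them vanishes. Concretely, by axiom $(SP3)$ we have $\DMBcn{S}[1]^{\bot}\supset\DMBcp{S}$, so $\Hom_{\DMBc{S}}(P,N)=0$ whenever $P\in\DMBc{S}_{w\geqslant 1}$ and $N\in\DMBcn{S}$ (or after iterating, whenever the weights are separated by at least one). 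Each item is then a bookkeeping exercise in tracking how the twist-shift $(a)[b]$ moves an object's weight, using the $w$-distributivity principle (\ref{w.distrib}) to record that $(a)[b]$ shifts weights by $-b$ (since $\Un_S(a)$ is pure of weight $0$ and $[b]$ lowers weight by $b$).

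First I would treat $(i)$: by Theorem \ref{thm.mesure.motifs}, $f_!f^*\Un_S=f_!\Un_X\in\DMBcn{S}$, while $g_!\Un_Y(a)[b]\in\DMBc{S}_{w\geqslant-d_Y}(a)[b]$, i.e.\ in $\DMBc{S}_{w\geqslant -d_Y-b}$; when $b>2a+d_Y$ the target sits in weights $\geqslant 1$ relative to the source (after accounting for the twist $(a)$ via $(rap.\,3)$-style purity, which keeps weight fixed), so orthogonality $(SP3)$ kills the group. Item $(i)'$ is the special case $X=Y=S$, $f=g=\Id$, reading off $\HB^b(S,\Q(a))=\Hom(\Un_S,\Un_S(a)[b])$ from $(rap.\,7)$. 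For $(ii)$ and $(ii)'$ I would instead pair $f_!\Un_X\in\DMBcn{S}$ against $g_*g^!\Un_S(a)[b]$, using Proposition \ref{thm.mesure.motifs.3} (valid since $S$ is pseudo-r\'egulier) which gives $g_*g^!\Un_S\in\DMBcp{S}$; the twist-shift then places the target in $\DMBc{S}_{w\geqslant-b}(a)$, and $b>2a$ forces the weight gap.

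For the harder items $(iii)$ and $(iv)$ I would invoke the sharper two-sided bound of Corollary \ref{thm.mesure.motifs.2}, namely $f_*f^!\Un_S\in\DMBcp{S}\cap\DMBc{S}_{w\leqslant d_X}$, which requires $S$ regular and smooth over $B$ (hence the extra hypothesis). In $(iii)$ one pairs two such objects $f_*f^!\Un_S$ and $g_*g^!\Un_S(a)[b]$: the source lies in weights $[0,d_X]$ and the target in weights $[-b,d_Y-b]$ after twist-shift, so the two vanishing conditions $b>2a+d_X$ and $b<2a-d_Y$ separate the weight windows on opposite sides, giving vanishing by $(SP3)$ in each direction; the second equality in $(iii)$ follows by applying the duality functor $\D_S$ and Corollaire \ref{prop.dual.conj} (weights are reversed, and $\D_S$ interchanges $f_*f^!$ with $f_!f^*$ up to the Tate twist of Lemme \ref{rm.dual}). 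Item $(iv)$ is the mixed case pairing $f_*f^!\Un_S$ (weights in $[0,d_X]$) against $g_!g^*\Un_S(a)[b]=g_!\Un_Y(a)[b]$ (weights $\geqslant-d_Y-b$ by \ref{thm.mesure.motifs}, and $\leqslant-b$), with the combined condition $b>2a+d_X+d_Y$ together with $b<2a$ producing the separation. The main obstacle I anticipate is purely clerical rather than conceptual: keeping the sign conventions for how $(a)$ and $[b]$ act on weights consistent across all four items, and correctly translating each numerical inequality into the statement ``source weights and target weights differ by at least one,'' which is exactly the input needed to apply orthogonality $(SP3)$ and conclude.
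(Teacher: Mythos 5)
Your overall strategy coincides with the paper's: $(i)$ and $(ii)$ are the orthogonality axiom applied to the weight bounds of \ref{thm.mesure.motifs} and \ref{thm.mesure.motifs.3}, the primed items are the case $f=g=\Id_S$, and $(iii)$, $(iv)$ pass through the duality identification $\Hom_{\DMBc{S}}(f_*f^!\Un_S,g_*g^!\Un_S(a)[b])=\Hom_{\DMBc{S}}(f_!f^*\Un_S,g_!g^*\Un_S(-a)[-b])$, which the paper obtains from $\D_S$ being a contravariant auto-equivalence with $\D_S^2\simeq\Id$, together with $\D_S(f_!f^*\Un_S)=f_*f^!\Un_S(s)[2s]$ (lemmes \ref{lm.Brad} and \ref{rm.dual}). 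However, your bookkeeping rule is wrong under the paper's conventions: since $\CCe{n}=\CCe{0}[n]$ and the pure weight-zero objects are the $\Un_S(x)[2x]$ (not the $\Un_S(x)$), the shift $[b]$ \emph{raises} the weight index by $b$ and the twist $(a)$ alone lowers it by $2a$; hence $(a)[b]$ shifts weight by $b-2a$, not by $-b$, and $\Un_S(a)$ is pure of weight $-2a$, not $0$. With your stated rule the parameter $a$ would drop out of every estimate and none of the inequalities of the corollary could arise; the correct windows are, e.g., $g_!\Un_Y(a)[b]\in\DMBc{S}_{w\geqslant b-2a-d_Y}$ in $(i)$ (orthogonal to $f_!\Un_X\in\DMBcn{S}$ exactly when $b>2a+d_Y$) and $g_*g^!\Un_S(a)[b]\in\DMBc{S}_{w\geqslant b-2a}$ in $(ii)$. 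You flag this as clerical, and your final inequalities are indeed the right ones, but as written the computation does not produce them.

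The more substantive misstep is your claim that $(SP3)$ gives vanishing \emph{in each direction}. Weight-structure orthogonality is one-sided: it kills only maps from $\DMBcn{S}$ to $\DMBc{S}_{w\geqslant1}$, and maps from high weight to low weight do not vanish in general --- for instance $\Hom_{\DMBc{S}}(\Un_S,\Un_S(1)[1])=\HB^1(S,\Q(1))$ is typically nonzero even though the target has weight $-1$. Consequently, in $(iii)$ the condition $b<2a-d_Y$ does \emph{not} kill $\Hom_{\DMBc{S}}(f_*f^!\Un_S,g_*g^!\Un_S(a)[b])$ directly; it kills the dual group $\Hom_{\DMBc{S}}(f_!f^*\Un_S,g_!g^*\Un_S(-a)[-b])$, whose target $g_!\Un_Y(-a)[-b]$ then lies in weights $\geqslant 2a-b-d_Y\geqslant1$, and one concludes through the duality identity --- exactly the paper's routing, which your own duality step supplies, so your proof closes once this is made explicit; the same applies to $(iv)$. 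Note in passing that, read literally as conjunctions, the hypotheses of $(iii)$, $(iii)'$ and $(iv)$ are unsatisfiable for nonempty schemes ($d_X,d_Y,d_S\geqslant0$), so the operative reading, matching both the paper's proof and your corrected one, is that each of the two inequalities annihilates one of the two dually identified Hom groups. Finally, the paper also records an independent derivation of $(iii)'$ from the vanishing of negative rational $\KK$-theory; omitting it costs nothing.
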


\begin{proof}
On utilise l'axiome d'orthogonalité des structures de poids pour établir les annulations $(i)$ et $(ii)$. Les annulations $(i)'$ et $(ii)'$ suivent respectivement de $(i)$ et $(ii)$ pour $f=g=\Id_S$. Le foncteur de dualité étant une auto-équivalence de catégorie auto-duale $(rap. 8)$,
\begin{eqnarray*}
\Hom_{\DMBc{S}}(f_*f^!\Un_S,g_*g^!\Un_S(a)[b])&=&\Hom_{\DMBc{S}}(\D_S^2(f_*f^!\Un_S),g_*g^!\Un_S(a)[b])\\
&=&\Hom_{\DMBc{S}}(\D_S(f_*f^!\Un_S),\D_S(g_*g^!\Un_S(a)[b]))\\
&=&\Hom_{\DMBc{S}}(f_!f^*\Un_S(s)[2s],g_!g^*\Un_S(s-a)[2s-b])\\
&=&\Hom_{\DMBc{S}}(f_!f^*\Un_S,g_!g^*\Un_S(-a)[-b]),
\end{eqnarray*}
ce qui implique $(iii)$ par orthogonalité. Le même raisonnement amène $(iv)$. Finalement $(iii)'$ suit de $(iii)$ avec $f=g=\Id_S$ mais aussi du lien entre cohomologie de Beilinson et $\KK$-théorie \cite[cor. 13.2.14]{CD} (le groupe de Quillen rationnel $\KK_n(S)_\Q$ est nul si $n<0$).
\end{proof}

\begin{remarque}
Les points $(i)'$ et $(ii)'$ du précédent corollaire généralise \cite[cor. 13.2.14]{CD}. 
\end{remarque}

Comme application du précédent corollaire, on va définir une structure de poids sur la catégorie des motifs de Tate.

\begin{corollaire}\label{cor.struct.Poids.Tate}
Soit $S$ un schéma pseudo-régulier. Notons $\DMBcT{S}$ la catégorie des \SW{motifs de Tate} (\cf \cite[\S 3.3]{Esn.Lev}) : c'est la sous-catégorie pleine et triangulée de $\DMBc{S}$ engendrée par
$$\DMBc{S}\supset \T_S:=\left\{\Un_S(x)[2x]\big| x\in \Z\right\}.$$
Il existe une unique structure de poids bornée $\tate/\DMBcT{S}$ telle que $\T_S\subset\DMBcTe{S}$.

De plus, l'inclusion canonique $\DMBcT{S}\hookrightarrow\DMBc{S}$ est $w$-exacte.
\end{corollaire}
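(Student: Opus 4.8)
Le plan est d'appliquer le th\'eor\`eme de construction des structures de poids de Bondarko (\cite[thm. 4.3.2.II.1]{Bo} ; \cf\ la condition $(ii)$ de \cite[thm. 1.9]{Heb}) \`a la cat\'egorie triangul\'ee $\DMBcT{S}=\CA{\T_S}$ et \`a la classe g\'en\'eratrice $\T_S$. Le seul point non formel \`a v\'erifier est que $\T_S$ est \emph{n\'egative}, \ie que $\Hom_{\DMBcT{S}}(T,T'[i])=0$ pour tous $T,T'\in\T_S$ et tout $i>0$. Comme $\DMBcT{S}$ est pleine dans $\DMBc{S}$, je calculerais ce groupe dans $\DMBc{S}$ : en utilisant l'inversibilit\'e du twist-d\'ecalage $(x)[2x]$ et la d\'efinition de la cohomologie de Beilinson $(rap.\ 7)$,
$$\Hom_{\DMBc{S}}(\Un_S(x)[2x],\Un_S(y)[2y+i])=\HB^{2(y-x)+i}(S,\Q(y-x)).$$
En posant $a=y-x$ et $b=2a+i$, la condition $i>0$ \'equivaut \`a $b>2a$, et ce groupe est alors nul par le corollaire \ref{cor.ann.coh}$(ii)'$. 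C'est pr\'ecis\'ement ici qu'intervient l'hypoth\`ese $S$ pseudo-r\'egulier, et c'est l\`a le c{\oe}ur de la preuve.

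La n\'egativit\'e acquise, le th\'eor\`eme de construction fournit une structure de poids $\tate/\DMBcT{S}$ dont le c{\oe}ur $\DMBcTe{S}$ est l'enveloppe des r\'etractes de la plus petite sous-cat\'egorie additive contenant $\T_S$, d'o\`u $\T_S\subset\DMBcTe{S}$. La bornitude d\'ecoulera du fait g\'en\'eral rappel\'e en $(rap.\ 9)$ que les structures de poids issues de ce th\'eor\`eme sont born\'ees, $\DMBcT{S}$ \'etant par d\'efinition engendr\'ee par $\T_S$. L'unicit\'e sera fournie par le th\'eor\`eme lui-m\^eme, puisqu'une structure de poids born\'ee dont le c{\oe}ur contient $\T_S$ a n\'ecessairement pour parties n\'egative et positive
$$\DMBcTn{S}=\Kar\Big(\SE\big(\bigcup_{k\leqslant 0}\T_S[k]\big)\Big),\qquad \DMBcTp{S}=\Kar\Big(\SE\big(\bigcup_{k\geqslant 0}\T_S[k]\big)\Big),$$
descriptions ne d\'ependant que de $\T_S$.

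Pour la $w$-exactitude de l'inclusion $\iota:\DMBcT{S}\hookrightarrow\DMBc{S}$, je m'appuierais sur l'inclusion $\T_S\subset\Chow{S}=\DMBce{S}$. En effet $S$ pseudo-r\'egulier donne $\Un_S\in\Chow{S}$, et comme la classe g\'en\'eratrice $\H_S$ du c{\oe}ur est stable par le twist-d\'ecalage $(n)[2n]$, le c{\oe}ur $\Chow{S}$ l'est aussi, d'o\`u $\Un_S(x)[2x]\in\Chow{S}$ pour tout $x$. Par suite, pour $k\leqslant 0$ on a $\iota(\T_S[k])\subset\DMBce{S}[k]\subset\DMBcn{S}$ (un d\'ecalage n\'egatif du c{\oe}ur est de poids $\leqslant 0$ via l'axiome $(SP2)$), et sym\'etriquement $\iota(\T_S[k])\subset\DMBcp{S}$ pour $k\geqslant 0$. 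Le foncteur $\iota$ \'etant triangul\'e et additif, il transforme extensions en extensions et r\'etractes en r\'etractes ; comme $\DMBcn{S}$ et $\DMBcp{S}$ sont stables par extensions (\cf \ref{coeur.scinde}) et par r\'etractes (axiome $(SP1)$), les descriptions explicites ci-dessus entra\^inent $\iota(\DMBcTn{S})\subset\DMBcn{S}$ et $\iota(\DMBcTp{S})\subset\DMBcp{S}$, soit la $w$-exactitude.

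En r\'esum\'e, le principal obstacle est la n\'egativit\'e de $\T_S$, qui repose enti\`erement sur l'annulation cohomologique \ref{cor.ann.coh}$(ii)'$ ; une fois ce point \'etabli, l'existence, la bornitude et l'unicit\'e sont une application directe du th\'eor\`eme de construction de Bondarko, et la $w$-exactitude r\'esulte formellement de la stabilit\'e du c{\oe}ur $\Chow{S}$ par twist de Tate.
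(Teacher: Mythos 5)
Votre proposition suit essentiellement la même voie que l'article : celui-ci applique lui aussi le théorème de construction de Bondarko sous la forme de \cite[cor. 1.20]{Heb}, l'hypothèse de négativité (l'hypothèse $(ii)$ de \emph{loc. cit.}) étant fournie précisément par l'annulation \ref{cor.ann.coh}.$(ii)'$ — c'est bien là, comme vous le dites, le seul point non formel. Vous ne faites qu'expliciter ce que la preuve du papier délègue au corollaire cité (calcul de négativité via $\HB^{2(y-x)+i}(S,\Q(y-x))$, unicité, bornitude, $w$-exactitude de l'inclusion via $\T_S\subset\Chow{S}$), et ces détails sont corrects.
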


\begin{proof}
On applique le théorème de construction de Bondarko (\cite[cor. 1.20]{Heb}) ; l'hypothèse $(ii)$ de \textit{loc. cit.} suivant de \ref{cor.ann.coh}.$(ii)'$. 
\end{proof}

\begin{proposition}
Soit $f:\morph{T}{S}$ un morphisme de schémas tel que $S$ est pseudo-régulier.
\begin{description}
\item[$(i)$.] Si $T$ est pseudo-régulier, le foncteur $f^*:\morph{\DMBc{S}}{\DMBc{T}}$ induit par réstriction un foncteur $f^*:\morph{\DMBcT{S}}{\DMBcT{T}}$ qui est $w$-exact.
\item[$(ii)$.] Si $f$ est lisse, le foncteur $f^!:\morph{\DMBc{S}}{\DMBc{T}}$ induit par réstriction un foncteur $f^!:\morph{\DMBcT{S}}{\DMBcT{T}}$ qui est $w$-exact.
\item[$(iii)$.] Si $f$ est un morphisme projectif entre schémas réguliers, le foncteur $f^!:\morph{\DMBc{S}}{\DMBc{T}}$ induit par réstriction un foncteur $f^!:\morph{\DMBcT{S}}{\DMBcT{T}}$ qui est $w$-exacte.
\item[$(iv)$.] Si $S$ est lisse au dessus de $B$, le foncteur de dualité locale $\D_S$ induit une équivalence de catégories entre $\DMBcT{S}_{\tate\leqslant n}^\opp$ et $\DMBcT{S}_{\tate\geqslant -n}$ quelque soit $n\in \Z$.
\end{description}
\end{proposition}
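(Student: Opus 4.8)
Le plan est d'exploiter que la famille $\T_S\subset\DMBcTe{S}$ engendre $\DMBcT{S}$ et que la construction de Bondarko utilis\'ee en \ref{cor.struct.Poids.Tate} fournit les descriptions $\DMBcTp{S}=\Kar(\SE_{\DMBcT{S}}(\bigcup_{k\geqslant 0}\T_S[k]))$ et $\DMBcTn{S}=\Kar(\SE_{\DMBcT{S}}(\bigcup_{k\leqslant 0}\T_S[k]))$ ; il suffira alors de contr\^oler l'image des g\'en\'erateurs $\T_S$ par chaque foncteur. Pr\'ecis\'ement, si $F$ est un foncteur de cat\'egories triangul\'ees covariant v\'erifiant $F(\T_S)\subset\DMBcTe{T}$, alors $F$ envoie la sous-cat\'egorie triangul\'ee engendr\'ee par $\T_S$ dans celle engendr\'ee par $\T_T$, donc se restreint en $\morph{\DMBcT{S}}{\DMBcT{T}}$ ; de plus, pour $k\geqslant 0$, $F(\T_S[k])=F(\T_S)[k]\subset\DMBcTe{T}[k]\subset\DMBcTp{T}$, d'o\`u $F(\DMBcTp{S})\subset\DMBcTp{T}$ par stabilit\'e de $\DMBcTp{T}$ par extensions et r\'etractes, et sym\'etriquement $F(\DMBcTn{S})\subset\DMBcTn{T}$ : le foncteur est $w$-exact. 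Tout reviendra donc \`a calculer $F(\Un_S(x)[2x])$.

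Pour $(i)$, on a $f^*(\Un_S(x)[2x])=\Un_T(x)[2x]\in\T_T\subset\DMBcTe{T}$, la structure de poids sur $\DMBcT{T}$ existant par \ref{cor.struct.Poids.Tate} puisque $T$ est pseudo-r\'egulier. Pour $(ii)$, $f$ \'etant lisse de dimension relative $d$, la puret\'e relative $(rap. 3)$ et la compatibilit\'e de $f^!$ aux torsions donneront $f^!(\Un_S(x)[2x])=\Un_T(x+d)[2(x+d)]\in\T_T$ ; ici $T$ est pseudo-r\'egulier par \ref{prop.pseudo-reg.relatif}. Pour $(iii)$, je factoriserais $f$ comme dans la preuve de \ref{thm.conj.Heb} en $\morphp{X}{P}{S}$, o\`u $i:\morph{X}{P}$ est une immersion ferm\'ee de codimension $c$ entre sch\'emas r\'eguliers et $s:\morph{P}{S}$ est lisse de dimension relative $d$ ; alors $f^!=i^!s^!$, et la puret\'e relative puis absolue $(rap. 3)$ donnent $s^!\Un_S=\Un_P(d)[2d]$ et $i^!\Un_P=\Un_X(-c)[-2c]$, d'o\`u $f^!(\Un_S(x)[2x])=\Un_X(x+d-c)[2(x+d-c)]\in\T_T$ (le but $T=X$ \'etant r\'egulier, donc pseudo-r\'egulier). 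Le principe ci-dessus conclut dans les trois cas.

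Pour $(iv)$, j'utiliserais la variante contravariante du principe. D'abord \ref{rm.dual} et \ref{lm.Brad} donnent $\D_S(\Un_S(x)[2x])=\D_S(\Un_S)(-x)[-2x]=\Un_S(s-x)[2(s-x)]\in\T_S$, donc le foncteur contravariant $\D_S$ pr\'eserve $\DMBcT{S}$. Ensuite, pour $k\geqslant 0$, $\D_S(\T_S[k])=\D_S(\T_S)[-k]\subset\DMBcTe{S}[-k]\subset\DMBcTn{S}$ ; $\D_S$ transformant extensions en extensions et r\'etractes en r\'etractes, on obtient $\D_S(\DMBcTp{S})\subset\DMBcTn{S}$ et, sym\'etriquement, $\D_S(\DMBcTn{S})\subset\DMBcTp{S}$. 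Le cas d'un $n$ quelconque se ram\`ene \`a $n=0$ par d\'ecalage (les translations \'etant g\'er\'ees par \ref{lm.Brad}), ce qui montre $\D_S(\DMBcT{S}_{\tate\leqslant n})\subset\DMBcT{S}_{\tate\geqslant -n}$. Enfin, comme $\D_S^2\simeq\Id$ $(rap. 8)$, le foncteur $\D_S$ induit une anti-\'equivalence \'echangeant $\DMBcT{S}_{\tate\leqslant n}$ et $\DMBcT{S}_{\tate\geqslant -n}$, soit l'\'equivalence $\morph{\DMBcT{S}_{\tate\leqslant n}^\opp}{\DMBcT{S}_{\tate\geqslant -n}}$ voulue ; on pourrait aussi l'obtenir en restreignant \ref{prop.dual.conj}.

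L'obstacle principal n'est pas calculatoire mais r\'eside dans la justification propre du principe covariant/contravariant reliant l'action d'un foncteur sur les g\'en\'erateurs du c{\oe}ur \`a sa $w$-exactitude : tout repose sur la description explicite de $\DMBcTp{S}$ et $\DMBcTn{S}$ issue de la construction de Bondarko et sur la stabilit\'e par extensions et r\'etractes des classes de poids. Les calculs d'images des g\'en\'erateurs (puret\'e relative et absolue, dualit\'e du lemme \ref{rm.dual}) seront ensuite routiniers.
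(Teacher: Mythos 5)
Your proof is correct and takes essentially the same approach as the paper's: one checks everything on the generators $\T_S$ (action of $f^*$ on Tate twists, relative and absolute purity with the same projective factorization for $(iii)$, lemma \ref{rm.dual} for $(iv)$), the paper merely leaving implicit the $w$-exactness principle that you make explicit via Bondarko's description $\DMBcTp{S}=\Kar\bigl(\SE_{\DMBcT{S}}\bigl(\bigcup_{k\geqslant 0}\T_S[k]\bigr)\bigr)$. The only point where you go beyond the paper's (even terser) argument is the conclusion of $(iv)$, where your appeal to $\D_S^2\simeq\Id$ from $(rap.\;8)$ strictly requires $S$ regular while the statement only assumes $S$ pseudo-regular and smooth over $B$ --- a scruple the paper's own proof shares, since it only verifies that $\D_S$ preserves the heart (on the $\otimes$-invertible generators $\T_S$ biduality holds by rigidity, which closes this gap in both versions).
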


\begin{proof}$ $
\begin{description}
\item[$(i)$.] Puisque le foncteur $f^*$ envoie les objets de $\T_S$ dans $\T_T$, il envoie $\DMBcT{S}$ dans $\DMBcT{T}$ (car il s'agit d'un foncteur de catégories triangulées).
\item[$(ii)$.] Si $f$ est lisse et $S$ pseudo-régulier alors $T$ est pseudo-régulier (\cf \ref{prop.pseudo-reg.relatif}) et on se ramène au cas $(i)$ par pureté relative $(rap. 3)$.
\item[$(iii)$.] Puisque $f$ est projectif, il s'écrit comme la composé d'une immersion fermée entre schémas réguliers et d'un morphisme lisse également entre schémas régulier. Pour la partie lisse on est ramené au cas $(ii)$ ; pour la partie 'immersion fermée' on se ramène au cas $(i)$ par pureté absolue $(rap. 3)$.
\item[$(iv)$.] Puisque $\D_S(\Un_S(x)[2x])=\D_S\Un_S(-x)[-2x]=\Un_S(s-x)[2(s-x)]$ (\cf \ref{rm.dual}) alors $\D_S$ respecte le c{\oe}ur de $\tate/\DMBcT{S}$.
\end{description}
\end{proof}

\subsection{Caractéristique d'Euler motivique.}

\begin{definition}
Soit $f:\morph{X}{S}$ un morphisme de schémas. On note
$$\W_S(X):=\W_S(f_!f^*\Un_S),\qquad\Wc_S(X):=\W_S(f_*f^!\Un_S).$$
\end{definition}

On fixe un schéma $S$. Tous les schémas considérés sont ceux de $\Sch/S$

\begin{theoreme}\label{equi.GS95.thm2}$ $
\begin{description}
\item[$(i)$.] Si $X$ est un schéma de dimension $d$, 
$$\W_S(X)\in \htt^{[\![0,d]\!]}(\Chow{S}).$$
\item[$(ii)$.] Tout morphisme propre $p:\morph{X}{Y}$ induit un morphisme de complexes (à homotopie près)
$$p^\W:\morph{\W_S(Y)}{\W_S(X)}.$$
Si $p$ et $q$ sont des morphismes propres composables alors $(qp)^\W=p^\W q^\W$.

Toute immersion ouverte $j:\morph{X}{Y}$ induit un morphisme de complexes (à homotopie près)
$$j_\W:\morph{\W_S(X)}{\W_S(Y)}.$$
Si $j$ et $k$ sont des immersions ouvertes composables alors $(kj)_\W=k_\W j_\W$.
\item[$(iii)$.] Soit $j:\immop{U}{X}$ une immersion ouverte de fermé complémentaire $i:\immcl{Z}{X}$. Alors on a le triangle distingué 
de $\htt^b(\Chow{S})$,
$$\xymatrix@C=2cm{\W_S(U)\ar[r]^{j_\W}&\W_S(X)\ar[r]^{i^\W}&\W_S(Z)\ar^{+1}[r]&}.$$
\item[$(iv)$.]
\setlength{\columnseprule}{1pt}
\begin{multicols}{2}
Soit
$$\xymatrix{W\immouv[r]^{j'}\immouv[d]_{k'}\ar@{}[rd]|\square&Y\immouv[d]^k\\
U\immouv[r]_j&X}$$
un diagramme Nisnevich-distingué tel que $k$ soit une immersion ouverte.

\noindent(resp.
$$\xymatrix{T\immfer[r]^{i'}\ar[d]_{p'}\ar@{}[rd]|\square&Y\ar[d]^p\\
Z\immfer[r]_i&X}$$
un diagramme propre $cdh$-distingué).
\end{multicols}
Alors on a le triangle distingué de $\htt^b(\Chow{S})$
$$\xymatrix@C=2cm@R=0cm{&\W_S(W)\ar[r]^-{\left(^{\; j'_\W}_{-k'_\W}\right)}&\W_S(U)\oplus\W_S(V)\ar[r]^-{(k_\W, j_\W)}&\W_S(X)\ar[r]^{+1}&\\
\text{(resp.}&\W_S(X)\ar[r]_-{\left(^{\; i^\W}_{-p^\W}\right)}&\W_S(Z)\oplus\W_S(Y)\ar[r]_-{(p'^\W, i'^\W)}&\W_S(T)\ar[r]^{+1}&).}$$
\end{description}
\end{theoreme}

\begin{proof}$ $
\begin{description}
\item[$(i)$.] Cela suit de \ref{thm.mesure.motifs} et \ref{prop.Bo.3.3.1.IV}.
\item[$(ii)$.] Notons $x:\morph{X}{S}$ et $y:\morph{Y}{S}$ les morphismes structuraux de $X$ et $Y$ : $yp=x$. On considère le morphisme d'adjonction $\morph{\Un_Y}{p_*p^*\Un_Y}$ qui correspond, puisque $p$ est propre, à $\morph{\Un_Y}{p_!\Un_X}$. On compose ce dernier morphisme par $y_!$ pour obtenir
$\morph{y_!\Un_Y}{x_!\Un_X}$. Finalement
$$p^\W:=\W_S(\morph{y_!\Un_Y}{x_!\Un_X}).$$
Soit $q:\morph{Y}{Z}$ un morphisme propre. Alors l'adjonction $\morph{\Un_Z}{(qp)_*(qp)^*\Un_Z}$, s'identifie à la composée
$\morphp{\Un_Z}{q_*q^*\Un_Z}{q_*p_*p^*q^*\Un_Z}$, ce qui justifie que $(qp)^\W=p^\W q^\W$.

Dualement, l'adjonction $\morph{j_\sharp j^*\Un_Y}{\Un_Y}$ composée avec $y_!$ donne $\morph{x_!\Un_X}{y_!\Un_Y}$. Finalement
$$j_\W:=\W_S(\morph{x_!\Un_X}{y_!\Un_Y}).$$
Le même raisonnement que précédemment prouve que $(kj)_\W=k_\W j_\W$.
\item[$(iii)$.] Notons respectivement $u$, $z$ et $x$ les morphismes structuraux de $U$, $Z$ et $X$ ; on a $xj=u$ et $xi=z$. On part du triangle de localisation 
$\tridis{j_!\Un_U}{\Un_X}{i_!\Un_Z}$
que l'on compose par $x_!$ pour obtenir le triangle
$$\tridis{u_!\Un_U}{x_!\Un_X}{z_!\Un_Z}.$$
On applique le foncteur $\W_S$ pour conclure.
\item[$(iv)$.] On applique le triangle de $(rap. 2)$ :
$$\tridis{w_!\Un_W}{u_!\Un_U\oplus v_!\Un_V}{x_!\Un_X},$$
où $w$, $u$, $v$ et $x$ représentent les morphismes structuraux de $W$, $U$, $V$ et $X$. On conclut en appliquant le foncteur $\W_S$. 
Respectivement avec 
$$\tridis{x_!\Un_X}{z_!\Un_Z\oplus y_!\Un_Y}{t_!\Un_T}.$$
\end{description}
\end{proof}

\begin{corollaire}[Caractéristique d'Euler motivique]\label{cor.car.Euler}
La transformation
$$\xymatrix@R=0cm{\chi_S : \Ob(\Sch/S)\ar[r]&\KK_0(\Chow{S})\\
X\ar@{|->}[r]&\dpl{\sum_{k}(-1)^k[\W_S(X)^k]}}$$
où $[M]$ désigne la classe de $M\in \Chow{S}$ dans le groupe de Grothendieck, est bien définie et vérifie
\begin{description}
\item[$(i)$.] Si $f:\morph{X}{S}$ est propre à domaine régulier alors $\chi_S(X)=[f_!\Un_X]$.
\item[$(ii)$.] Si $U$ est un ouvert de $X$ de fermé complémentaire $Z$, on a $\chi_S(X)=\chi_S(U)+\chi_S(Z)$.
\item[$(ii)'$.] Si $Z$ est un sous-schéma fermé de $X$, on a $\chi_S(X-Z)=\chi_S(X)-\chi_S(Z)$.
\item[$(iii)$.] $ $
\setlength{\columnseprule}{1pt}
\begin{multicols}{2}
Soit
$$\xymatrix{Y\immouv[r]^{j'}\immouv[d]_{k'}\ar@{}[rd]|\square&B\immouv[d]^k\\
A\immouv[r]_j&X}$$
un diagramme Nisnevich-distingué tel que $k$ soit une immersion ouverte.

\noindent(resp.
$$\xymatrix{Y\immfer[r]^{i'}\ar[d]_{p'}\ar@{}[rd]|\square&B\ar[d]^p\\
A\immfer[r]_i&X}$$
un diagramme propre $cdh$-distingué).
\end{multicols}
alors
$$\chi_S(X)+\chi_S(Y)=\chi_S(A)+\chi_S(B).$$
\end{description}
\end{corollaire}

\begin{proof}
La transformation 
$$\xymatrix@R=0cm{\chi_S : \Ob(\Sch/S)\ar[r]&\KK_0(\htt^b(\Chow{S}))\\
X\ar@{|->}[r]&[\W_S(X)]}$$
est clairement bien définie. D'aprés \cite[lm. 3]{GS95} on a un isomorphisme de groupe $\KK_0(\htt^b(\Chow{S}))\simeq\KK_0(\Chow{S})$ qui permet l'identification de l'énoncé. 
La condition $(i)$ vient du fait que si $f$ est propre à domaine régulier $f_!\Un_X\in \Chow{S}$. La condition $(ii)$ suit de \ref{equi.GS95.thm2}.$(iii)$, la condition $(ii)'$ n'est qu'une reécriture de $(ii)$. Enfin $(iii)$ suit de \ref{equi.GS95.thm2}.$(iv)$.
\end{proof}

\begin{remarque}
Pour $S=\Spec(k)$, où $k$ désigne un corps de caractèristique $0$, le théorème \ref{equi.GS95.thm2} coïncide avec \cite[thm. 2.$(i)$, $(ii)$ ,$(iii)$ ,$(iv)$]{GS95} tandis que le corollaire \ref{cor.car.Euler} correspond à \cite[cor. 5.13]{GS09}.
\end{remarque}

\begin{theoreme}\label{equi.GS95.thm2.dual}$ $
\begin{description}
\item[$(i)$.] Si $S$ est un schéma pseudo-régulier, 
$$\Wc_S(X)\in \htt^{\leqslant 0}(\Chow{S})\cap\htt^{b}(\Chow{S}).$$
Si $S$ est régulier et lisse au dessus de $B$ et $d=\Dim(X)$,
$$\Wc_S(X)\in \htt^{[\![-d,0]\!]}(\Chow{S}).$$
\item[$(ii)$.] Tout morphisme propre $p:\morph{X}{Y}$ induit un morphisme de complexes (à homotopie près)
$$p_\Wc:\morph{\Wc_S(X)}{\Wc_S(Y)}.$$
Si $p$ et $q$ sont des morphismes propres composables alors $(qp)_\Wc=q_\Wc p_\Wc$.

Toute immersion ouverte $j:\morph{X}{Y}$ induit un morphisme de complexes (à homotopie près)
$$j^\Wc:\morph{\Wc_S(Y)}{\Wc_S(X)}.$$
Si $j$ et $k$ sont des immersions ouvertes composables alors $(kj)^\Wc=j^\Wc k^\Wc $.
\item[$(iii)$.] Soit $j:\immop{U}{X}$ une immersion ouverte de fermé complémentaire $i:\immcl{Z}{X}$. Alors on a le triangle distingué 
de $\htt^b(\Chow{S})$,
$$\xymatrix@C=2cm{\Wc_S(Z)\ar[r]^{i_\Wc}&\Wc_S(X)\ar[r]^{j^\Wc}&\Wc_S(U)\ar^{+1}[r]&}$$
\item[$(iv)$.]$ $
\setlength{\columnseprule}{1pt}
\begin{multicols}{2}
Soit
$$\xymatrix{W\immouv[r]^{j'}\immouv[d]_{k'}\ar@{}[rd]|\square&Y\immouv[d]^k\\
U\immouv[r]_j&X}$$
un diagramme Nisnevich-distingué tel que $k$ soit une immersion ouverte.

\noindent(resp.
$$\xymatrix{T\immfer[r]^{i'}\ar[d]_{p'}\ar@{}[rd]|\square&Y\ar[d]^p\\
Z\immfer[r]_i&X}$$
un diagramme propre $cdh$-distingué).
\end{multicols}
Alors on a le triangle distingué de $\htt^b(\Chow{S})$
$$\xymatrix@C=2cm@R=0cm{&\Wc_S(X)\ar[r]^-{\left(^{j^\Wc}_{k^\Wc}\right)}&\Wc_S(U)\oplus\Wc_S(V)\ar[r]^-{(-k'^\Wc, j'^\W)}&\W_S(W)\ar[r]^{+1}&\\
\text{(resp.}&\Wc_S(T)\ar[r]_-{\left(^{p'_\Wc}_{i'_\Wc}\right)}&\Wc_S(Z)\oplus\Wc_S(Y)\ar[r]_-{(i_\Wc, -p_\Wc)}&\Wc_S(X)\ar[r]^{+1}&).}$$
\end{description}
\end{theoreme}

\begin{proof} On raisonne comme dans la preuve de \ref{equi.GS95.thm2}. De manière sibylline :
\begin{description}
\item[$(i)$.] \cf \ref{thm.mesure.motifs.2}, \ref{prop.Bo.3.3.1.IV}, \ref{thm.mesure.motifs.3}.
\item[$(ii)$.] On pose $p_\Wc:=\W_Sy_*\left(\morph{p_*p^!(y^!\Un_S)}{(y^!\Un_S)}\right)$ et $j^\Wc:=\W_Sy_*\left(\morph{y^!\Un_S}{j_* j^*(y^!\Un_S)}\right)$.
\item[$(iii)$.] 
$\W_Sx_*\left(\tridis{i_*i^!(x^!\Un_S)}{(x^!\Un_S)}{j_*j^!(x^!\Un_S)}\right)$.
\item[$(iv)$.] $\W_Sx_*\left(\tridis{x^!\Un_S}{k_*k^!(x^!\Un_S)\oplus j_*j^!(x^!\Un_S)}{(kj')_*(kj')^!(x^!\Un_S)}\right)$,\\
$\W_Sx_*\left(\tridis{(pi')_*(pi')^!(x^!\Un_S)}{p_*p^!(x^!\Un_S)\oplus i_*i^!(x^!\Un_S)}{x^!\Un_S}\right)$.
\end{description}
\end{proof}

\begin{corollaire}[Caractéristique d'Euler motivique (version duale)]\label{car.Euler.dual}
La transformation
$$\xymatrix@R=0cm{\chic_S : \Ob(\Sch/S)\ar[r]&\KK_0(\Chow{S})\\
X\ar@{|->}[r]&\dpl{\sum_{k}(-1)^k[\Wc_S(X)^k]}}$$
où $[M]$ désigne la classe de $M\in \Chow{S}$ dans le groupe de Grothendieck, est bien définie et vérifie
\begin{description}
\item[$(i)$.] Si $f:\morph{X}{S}$ est un morphisme propre entre schémas réguliers, $\chic_S(X)=[f_*f^!\Un_S]$.
\item[$(ii)$.] Si $U$ est un ouvert de $X$ de fermé complémentaire $Z$, on a $\chic_S(X)=\chic_S(U)+\chic_S(Z)$.
\item[$(ii)'$.] Si $Z$ est un sous-schéma fermé de $X$, on a $\chic_S(X-Z)=\chic_S(X)-\chic_S(Z)$.
\item[$(iii)$.]
\setlength{\columnseprule}{1pt}
\begin{multicols}{2}
Soit
$$\xymatrix{Y\immouv[r]^{j'}\immouv[d]_{k'}\ar@{}[rd]|\square&B\immouv[d]^k\\
A\immouv[r]_j&X}$$
un diagramme Nisnevich-distingué tel que $k$ soit une immersion ouverte.

\noindent(resp.
$$\xymatrix{Y\immfer[r]^{i'}\ar[d]_{p'}\ar@{}[rd]|\square&B\ar[d]^p\\
A\immfer[r]_i&X}$$
un diagramme propre $cdh$-distingué).
\end{multicols}
alors
$$\chic_S(X)+\chic_S(Y)=\chic_S(A)+\chic_S(B).$$
\end{description}
\end{corollaire}

\begin{proof}
On raisonne comme en \ref{cor.car.Euler}.
\end{proof}

\begin{remarque}
La caractéristique d'Euler motivique version duale fut introduite par Bondarko dans \cite{Bo2} dans lequel, il démontre la partie $(ii)'$ du précédent corollaire (\cf \cite[prop. 3.2.1]{Bo2}).
\end{remarque}

\section{Remerciements.}
\addcontentsline{toc}{section}{Remerciements.}
Je remercie Frédéric Déglise, pour avoir pris le temps de répondre à toutes mes questions.
Je tiens particulièrement à remercier Bradley Drew pour le temps et l'intérêt qu'il a porté à ce travail ; notament pour m'avoir donné une preuve simple du lemme \ref{lm.Brad}.

\addcontentsline{toc}{section}{Bibliographie}
\bibliographystyle{amsalpha}
\bibliography{Bibliographie}

\end{document}